\newcommand\C{{\mathbb C}}
\newcommand\F{{\mathbb F}}
\newcommand\Q{{\mathbb Q}}
\newcommand\Z{{\mathbb Z}}
\newcommand\A{{\mathbb A}}
\newcommand\GL{{\mathrm {GL}}}
\newcommand\SL{{\mathrm {SL}}}
\newcommand\Gal{{\mathrm {Gal}}}
\newcommand\HH{{\mathcal H}}
\newcommand\OO{{\mathcal O}}
\newcommand\gal{\mathrm{Gal}}
\newcommand\ns{\mathrm{ns}}
\newcommand\ord{\mathrm{Ord}}
\newcommand\h{\mathrm{h}}
\newtheorem{theorem}{Theorem}[section]
\newtheorem{proposition}[theorem]{Proposition}
\newtheorem{lemma}[theorem]{Lemma}
\newtheorem{corollary}[theorem]{Corollary}
\numberwithin{equation}{section}
\begin{document}
\title{Bounding $j$-invariant of integral points  on $X_{\ns}^{+}(p)$}

\author{Aur\'elien Bajolet}
\address{Institut de Mathematiques de Bordeaux, Universite Bordeaux 1
, 33405 Talence Cedex, France}
\email{Aurelien.Bajolet@math.u-bordeaux1.fr}

\author{Min Sha}
\address{Institut de Mathematiques de Bordeaux, Universite Bordeaux 1
, 33405 Talence Cedex, France}
\email{shamin2010@gmail.com}
\thanks{
The second author is supported by China Scholarship Council.}

\subjclass[2010]{Primary 11G16, 11J86; Secondary 14G35, 11G50}

\keywords{Modular curves, Non-split Cartan, $j$-invariant,  Baker's method}

\date{}

\begin{abstract}
For prime $p\ge 7$, by using Baker's method we obtain two explicit bounds in terms of $p$
for the $j$-invariant of an integral point on $X_{\ns}^{+}(p)$ which is
the modular curve of level~$p$ corresponding to the normalizer of a non-split Cartan subgroup of $\GL_2(\Z/p\Z)$.
\end{abstract}

\maketitle

\section{Introduction}

Let~$p$ be a prime number, $p\ge 7$.
We denote by $X_{\ns}^{+}(p)$ the modular curve of level~$p$ corresponding to the normalizer of a non-split Cartan subgroup of $\GL_2(\Z/p\Z)$. See, for instance,  Serre~\cite{Se97}, Section~A.5 for definitions and basic properties. In particular, this curve has a canonical $\Q$-model, which will be used throughout.  One can similarly define $X_{\ns}^{+}(N)$ of a composite level~$N$, but we restrict to prime levels in this article.

We denote by~$j$ the standard $j$-invariant function on $X_{\ns}^{+}(p)$.
We call a rational point ${P\in X_{\ns}^{+}(p)(\Q)}$ an integral point with respect to~$j$ if ${j(P) \in \Z}$.

The modular curve $X_{\ns}^{+}(p)$ has ${(p-1)/2}$ cusps, and all its cusps are conjugate over $\Q$.
Hence, by classical Siegel's finiteness theorem~\cite{Si29}, for ${p\ge 7}$ the curve $X_{\ns}^{+}(p)$ can have only finitely many integral points. Moreover,  as follows from \cite[Proposition~5.1(a)]{Bi95},
their sizes can be bounded effectively in terms of~$p$.

In fact, under ``Runge condition" which roughly says that all the cusps are not conjugate,
there is an explicit bound for the $j$-invariant of the integral points on arbitrary modular curves
over arbitrary number fields,
see \cite[Theorem 1.2]{BP10}. Unfortunately, Runge condition fails for $X_{\ns}^{+}(p)$.
So we must introduce other techniques.

In this paper we use Baker's method, more precisely Baker's inequality in the form due to Matveev \cite[Corollary 2.3]{Ma},
to obtain two explicit bounds in terms of $p$
for the $j$-invariant of an integral point on $X_{\ns}^{+}(p)$.

Our first main result is the following general theorem.

\begin{theorem}
\label{main1}
Assume that ${p\ge 7}$ and let ${d\ge 3}$ be a divisor of  ${(p-1)/2}$.  Then for any integral point~$P$ on $X_{\ns}^{+}(p)$ we have
$$
\log|j(P)| <C(d)p^{6d+5}(\log p)^{2},
$$
where $C(d)=30^{d+5}\cdot d^{-2d+4.5}$.
\end{theorem}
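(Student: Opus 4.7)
My plan is to apply Matveev's effective lower bound for linear forms in logarithms, as cited from~\cite{Ma}, to a combination of logarithms of modular units on $X_{\ns}^{+}(p)$ adapted to the divisor~$d$ of $(p-1)/2$.

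First, I would construct a system of $d$ modular units $u_1,\dots,u_d$ on $X_{\ns}^{+}(p)$, defined over the unique subfield $K\subset\Q(\zeta_p)^{+}$ of degree $d$ over~$\Q$; natural candidates are symmetrised products of Siegel functions on $X(p)$, averaged first over the subgroup of index $d$ in $(\Z/p\Z)^{\ast}/\{\pm 1\}$ and then over the normaliser of a non-split Cartan subgroup. Their divisors are supported on the $(p-1)/2$ cusps of $X_{\ns}^{+}(p)$, which are partitioned into $d$ orbits by $\Gal(K/\Q)$. For an integral point $P \in X_{\ns}^{+}(p)(\Q)$ each value $u_i(P)$ lies in $K$ and is an $S$-unit, with $S$ the set of places of $K$ lying above $p$ together with the archimedean ones, because $j(P)\in\Z$ prevents $P$ from meeting a cusp at any finite place different from those above~$p$.

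Second, I would use that the $(p-1)/2$ cusps are torsion in $\mathrm{Pic}^{0}(X_{\ns}^{+}(p))$ (Manin--Drinfeld), so the $d$ divisors $\mathrm{div}(u_i)$ satisfy a nontrivial integer relation, which lifts to a multiplicative identity $\prod_{i}u_i^{b_i}=1$ in the function field. This yields
$$
\Lambda \;:=\; \sum_{i=1}^{d} b_i\log u_i(P) \;\in\; 2\pi i\,\Z
$$
for any choice of branches. Fix a complex embedding in which $|j(P)|$ is largest; then $P$ is extremely close in the $q$-disk to the nearest cusp, and the expansion $j=q^{-1}+O(1)$ together with the $q$-expansions of the $u_i$ provide an upper bound
$$
\log|\Lambda-2\pi i\,m| \;\le\; -\,c_1(d)\log|j(P)|+c_2(d)\log p
$$
for some integer~$m$. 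On the other hand, Matveev's Corollary~2.3 applied in~$K$ with degree parameter $D=d$, $n=d$ logarithms, absolute logarithmic heights of the $u_i(P)$ polynomial in~$p$, and coefficient bound $B\ll p\log p$ gives the matching lower bound
$$
\log|\Lambda-2\pi i\,m| \;\ge\; -\,30^{d+5}\,d^{-2d+4.5}\,p^{6d+5}(\log p)^{2},
$$
and the comparison with the upper bound proves the theorem.

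The main obstacle, as always in this circle of ideas, is quantitative height control: one must bound the absolute logarithmic heights $\h(u_i(P))$, the integer exponents $b_i$ coming from the Manin--Drinfeld relation, and the winding number~$m$, all by small powers of~$p$ with explicit constants, tracking carefully the ramification of $K$ over~$p$ and the size of $q$-expansion coefficients of the symmetrised Siegel functions on~$X(p)$. The hypothesis $d\ge 3$ plays two roles: it guarantees that the multiplicative relation among the $u_i$ is genuinely nontrivial and non-trivially multi-term, and it fits the parameter range where Matveev's combinatorial constant yields the claimed shape $30^{d+5}d^{-2d+4.5}$.
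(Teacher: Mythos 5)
There is a genuine gap at the core of your argument: the quantity you propose to feed into Matveev's theorem is identically a period, so Baker's method gives no information. If the integer relation $\sum_i b_i\,\mathrm{div}(u_i)=0$ supplied by Manin--Drinfeld lifts to a multiplicative identity $\prod_i u_i^{b_i}=c$ with $c$ a constant, then for \emph{every} point $P$ one has $\sum_i b_i\log u_i(P)-\log c\in 2\pi i\,\Z$ exactly; after subtracting the correct multiple of $2\pi i$ the linear form vanishes, Matveev's lower bound (which requires nonvanishing) is inapplicable, and the comparison with your upper bound says nothing about $j(P)$. In the paper, Manin--Drinfeld is used only to guarantee that the group generated by the divisors $(u_{\mathcal O})$ has full rank $d-1$, i.e.\ that a certain auxiliary function $W$ with $\ord_c W=0$ is nonconstant; it is not the source of the linear form. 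The linear form instead arises by expressing the algebraic number $U(P)$, which generates the fixed ideal $(\mu^{12p})$ with $\mu=\mathcal{N}_{\Q(\zeta_p)/K}(1-\zeta_p)$, in terms of fixed cyclotomic units $\eta_1,\dots,\eta_{d-1}$ of $K$ with \emph{unknown} integer exponents $b_1,\dots,b_{d-1}$ depending on $P$, and by comparing $W(P)$ with its limiting value at the cusp: $\Lambda=m\log|W(P)|+m\log\alpha_d$ is small (of size $|q_c(P)|^{1/p}$) but in general nonzero, and the degenerate case $\Lambda=0$ requires a separate argument via $q$-expansion coefficients (Section 6 of the paper), which your proposal does not address at all.

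A second, related flaw is the height and coefficient bookkeeping. You take the values $u_i(P)$ themselves as the algebraic numbers in Matveev, with heights ``polynomial in $p$'' and coefficient bound $B\ll p\log p$. But $\log|u_i^{\sigma}(P)|$ is approximately $(\ord_c u_i^{\sigma}/p)\log|q_c(P)|$, so $\h(u_i(P))$ grows linearly in $\log|j(P)|$; feeding these heights into Matveev makes the lower bound itself depend on $\log|j(P)|$ and the argument becomes circular. Likewise, in the correct setup the exponents are not a priori of size $p\log p$: they are bounded linearly in $\log|q_c(P)^{-1}|$ by inverting the regulator matrix of the $\eta_k$ (this needs the lower bound $R_K>0.32$ and the index bound involving $h^+$), and only the resulting self-improving inequality $B\le K_1\log B+K_2$ closes the loop. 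Without these two structural ingredients --- logarithms of \emph{fixed} algebraic numbers of controlled height, and unknown exponents tied to $\log|q_c(P)^{-1}|$ --- the stated bound $C(d)p^{6d+5}(\log p)^2$ cannot be derived along the lines you sketch.
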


In particular, if we choose $d=\frac{p-1}{2}$ in Theorem \ref{main1}, we obtain a bound which is explicit in~$p$.
\begin{theorem}\label{main2}
\label{tmain}
Assume that ${p\ge 7}$. Then for any integral point~$P$ on $X_{\ns}^{+}(p)$ we have
$$
\log|j(P)| <41993\cdot 13^{p} \cdot p^{2p+7.5}(\log{p})^{2}.
$$
\end{theorem}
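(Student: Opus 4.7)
The plan is to obtain Theorem \ref{main2} as a direct corollary of Theorem \ref{main1} by taking $d=(p-1)/2$, which is the largest admissible choice. For $p\ge 7$ we have $d=(p-1)/2\ge 3$, and $d$ trivially divides $(p-1)/2$, so the hypothesis of Theorem \ref{main1} is satisfied. Thus the entire proof reduces to an algebraic simplification of the resulting bound.

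I would first compute the exponents in closed form: substituting $d=(p-1)/2$ gives $6d+5=3p+2$, $d+5=(p+9)/2$, and $-2d+4.5=5.5-p$, so Theorem \ref{main1} becomes
$$
\log|j(P)| < 30^{(p+9)/2}\left(\frac{p-1}{2}\right)^{5.5-p} p^{3p+2}(\log p)^{2}.
$$
The only awkward feature on the right is the factor $((p-1)/2)^{5.5-p}$, which has the base $p-1$ (rather than $p$) raised to a large negative power; every other factor is already in the shape demanded by the conclusion.

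To convert powers of $p-1$ into powers of $p$, I would use the elementary inequality $p-1 \ge (6/7)p$, valid for $p\ge 7$. Applied to $(p-1)^{p-5.5}$ in the denominator, this replaces $p^{3p+2}/(p-1)^{p-5.5}$ by $(7/6)^{p-5.5}\,p^{2p+7.5}$, which exactly matches the target's power of $p$. Collecting the remaining constants and factors of the form $a^{p}$, the bound takes the shape
$$
K\cdot \left(\tfrac{7\sqrt{30}}{3}\right)^{p}\cdot p^{2p+7.5}(\log p)^{2},\qquad K=30^{4.5}\,(3/7)^{5.5}.
$$

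The final step is two numerical checks: $7\sqrt{30}/3<13$ (from $\sqrt{30}<5.478$) and $K<41993$. The former absorbs the $p$-th-power factor into $13^{p}$, and the latter gives the stated constant. I expect no conceptual difficulty; the ``hard'' part is simply bookkeeping of fractional exponents and verifying that the inequality $p-1\ge (6/7)p$ suffices to reach the precise constants $13$ and $41993$ in the statement.
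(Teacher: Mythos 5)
Your proposal is correct and coincides with the paper's own (sketched) derivation: the paper likewise obtains Theorem \ref{tmain} from Theorem \ref{main1} by setting $d=(p-1)/2$ and invoking $p-1\ge 6p/7$, and your bookkeeping checks out, since $30^{4.5}(3/7)^{5.5}\approx 41992.7<41993$ and $7\sqrt{30}/3\approx 12.79<13$. Note only that the constant check is tight (within about $0.3$ of $41993$), so the numerical verification must be done with adequate precision.
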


By comparing these two theorems, the bound in Theorem \ref{tmain} can be drastically reduced if $\frac{p-1}{2}$ has a small divisor.

The interest in integral points on the modular curves corresponding to normalizers of
Cartan subgroups  is motivated by their relation to imaginary quadratic field of low class number.
See Appendix~A in Serre's book~\cite{Se97} for a nice historical account and further explanations.
In particular, integral points on the curves $X_{\ns}^{+}(24)$ and  $X_{\ns}^{+}(15)$
were studied by Heegner~\cite{He52} and Siegel~\cite{Si68} in their classical work on the
class number~$1$ problem. Kenku~\cite{Ke85} determined all integral points on $X_{\ns}^{+}(7)$,
and Baran~\cite{Ba09,Ba10} did this for $X_{\ns}^{+}(9)$ and $X_{\ns}^{+}(15)$.

A general method for computing integral points on $X_{\ns}^{+}(p)$ is developed in~\cite{BaBi11}.
Much more is known on integral and even rational points on modular curves corresponding
to the normalizers of split Cartan subgroups,  see \cite{BP11,BPR11}.
In particular, the authors in \cite{BP11} used the bound of $j$-invariant of integral points
to solve Serre's uniformity problem in the split Cartan case and finally left this problem with
the non-split Cartan case.

In addition, recently the second author has used a different approach by applying Baker's method to
get some effective bounds for the $j$-invariant of integral points on arbitrary modular curves
 over arbitrary number fields assuming that the number of cusps is not less than 3, see \cite{Sha}.

\section{Notations and conventions}
Through out this paper, $\log$ stands for the principal branch of the complex logarithm, in this case will use the following estimate without special reference
$$
|\log(1+z)|\le\frac{|\log(1-r)|}{r}|z|,
$$
for $|z|\le r<1$, see \cite[Formula (4)]{BP10}.

We fix $p$ a prime number not less than 7. Let $G$ be the normalizer of a non-split Cartan subgroup of $\GL_{2}(\Z/p\Z)$
and $X_{\ns}^{+}(p)$ be the modular curve corresponding to $G$. In fact, up to conjugation, we know
$$
G=\left\{\begin{pmatrix}\alpha &\Xi\beta \\ \beta & \alpha\end{pmatrix},\begin{pmatrix}\alpha &\Xi\beta \\ -\beta & -\alpha\end{pmatrix}
: \alpha,\beta\in \F_{p}, (\alpha,\beta)\ne (0,0)
\right\},
$$
where $\Xi$ is a quadratic non-residue modulo $p$. In particular, one can choose $\Xi=-1$ if $p\equiv 3\mod 4$.
Moreover, $|G|=2(p^{2}-1)$ following from \cite[Formula (2.3)]{Ba10} and $\det G=\F_{p}^{\times}$,
where $\det G$ is the image of $G$ under the determinant map $\det:\GL_{2}(\Z/p\Z)\to \F_{p}^{\times}$.


In the sequel, we fix a subgroup $H$ of $\F_{p}^{\times}$ such that $-1\in H$ and $[\F_{p}^{\times}:H]\ge 3$. Put $d=[\F_{p}^{\times}:H]$, then
we have
$$
d \left|\dfrac{p-1}{2} \right. \qquad {\rm and} \qquad d=[K:\Q],
$$
where $K=\Q(\zeta_{p})^{H}$ and $\zeta_{p}=e^{\frac{2\pi i}{p}}$.
We can identify the Galois group $\gal(K/\Q)$ with $\F_{p}^{\times}/H$, we also identify $\gal(\Q(\zeta_{p})/K)$ with $H$.
In particular, $K\subseteq \Q(\zeta_{p})^{+}$, where $\Q(\zeta_{p})^{+}=\Q(\zeta_{p}+\bar{\zeta_{p}})$.

Put
$$
G_{H}=\{g\in G:\det g\in H\}.
$$
Then the determinant map induces an isomorphism: $G/G_H\cong \F_{p}^{\times}/H$. We denote by $X_{H}$ the modular curve corresponding to $G_{H}$, which is defined over $K$.
Here $X_{H}$ and $X_{\ns}^{+}(p)$ have the same geometrically integral model, and the function field of
$X_{H}$ is $K(X_{\ns}^{+}(p))$. The curve $X_{H}$ also has the same cusps as $X_{\ns}^{+}(p)$.
In particular, $\gal(K(X_{H})/\Q(X_{\ns}^{+}(p)))\cong \gal(K/\Q)$.

Hence, in this paper
we identify the following four groups: $\gal(K(X_{H})/\Q(X_{\ns}^{+}(p)))$, $\gal(K/\Q)$, $\F_{p}^{\times}/H$ and $G/G_H$.
The readers should interpret the exact meaning based on the context.

Let $\mathcal{H}$ denote the Poincar$\acute{\rm e}$ upper half-plane: $\mathcal{H}=\{\tau\in\C: {\rm Im} \tau>0\}$.
We put $\bar{\HH}=\HH\cup\Q\cup\{i\infty\}$. We denote by $D$ the standard fundamental domain of $\SL_{2}(\Z)$.
If $\Gamma$ is the pullback of $G_{H}\cap\SL_{2}(\Z/p\Z)$ to $\SL_{2}(\Z)$,
then the set $X_{H}(\C)$ of complex points is analytically isomorphic to the quotient $\bar{\HH}/\Gamma$,
supplied with the properly defined topology and analytic structure.
See any standard reference like \cite{Shimura} for all the missing details.

For  $\textbf{a}=(a_{1},a_{2})\in \Q^{2}$,  we put $\ell_{\textbf{a}}
=B_{2}(a_{1}-\lfloor a_{1} \rfloor)/2$, where  $B_{2}(T)=T^{2}-T+\frac{1}{6}$ is the second Bernoulli polynomial.
Obviously $|\ell_{\textbf{a}}|\le 1/12$, this will be used without special reference.

We put ${\A=\left(p^{-1}\Z/\Z \right)^{2}\setminus \{(0,0)\}}$. In this paper, we also identify $p^{-1}\Z/\Z$ with $p^{-1}\F_{p}$.
Moreover we always choose a representative element of
 ${\textbf{a}=\left(a_{1},a_{2}\right)}\in (p^{-1}\Z/\Z)^{2}$ satisfying ${0 \le a_{1},a_{2} <1}$.
 So in the sequel for every $\textbf{a}\in (p^{-1}\Z/\Z)^{2}$, we have $\ell_{\textbf{a}}=B_{2}(a_{1})/2$.

In the sequel, we use the notation $O_{1}(\cdot)$. Precisely, $A=O_{1}(B)$ means that $|A|\le B$.

\section{Preparations}

\subsection{Siegel functions and modular units}
Let $\textbf{a}=(a_{1},a_{2})\in \Q^{2}$ be such that $\textbf{a}\not\in\Z^{2}$, and let $g_{\textbf{a}}$ be the corresponding
\textit{Siegel function}, see \cite[Section 2.1]{KL81}. We have the following infinite product presentation for $g_{\textbf{a}}$,
see \cite[Formula (7)]{BP10}.
$$
g_{\textbf{a}}(\tau)=-q_{\tau}^{B_{2}(a_{1})/2}e^{\pi ia_{2}(a_{1}-1)}\prod\limits_{n=0}^{\infty}\left(1-q_{\tau}^{n+a_{1}}e^{2\pi ia_{2}}\right) \left(1-q_{\tau}^{n+1-a_{1}}e^{-2\pi ia_{2}}\right),
$$
where $q_{\tau}=e^{2\pi i \tau}$ and $B_{2}(T)=T^{2}-T+\frac{1}{6}$ is the second Bernoulli polynomial.

From the proof of \cite[Proposition 2.3]{BP10} and replacing
$3|q_{\tau}|$ by $2.03|q_{\tau}|$ in \cite[Formula (11)]{BP10}, we get directly the following lemma.
\begin{lemma}\label{ga}
Let ${\rm\bf a}\in \Q^{2}\setminus \Z^{2}$. Then for $\tau\in D$, we have
\begin{equation}
\log|g_{{\rm\bf a}}(\tau)|=\ell_{{\rm\bf a}}\log|q_{\tau}|+\log|1-q_{\tau}^{a_{1}}e^{2\pi ia_{2}}|+\log|1-q_{\tau}^{1-a_{1}}e^{-2\pi ia_{2}}|+O_{1}(2.03|q_{\tau}|).
\notag
\end{equation}
\end{lemma}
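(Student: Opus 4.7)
The plan is to take $\log|\cdot|$ on both sides of the infinite product formula for $g_{\mathbf a}$ given just above the lemma, peel off the leading power of $q_\tau$ and the $n=0$ terms of the two infinite products (these are precisely the three ``main'' terms appearing in the statement), and then show that the remaining tail is $O_1(2.03|q_\tau|)$.

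Step 1. Taking $\log|\cdot|$ of the infinite product kills the factors $-1$ and $e^{\pi i a_2(a_1-1)}$ (both of modulus $1$), leaving
\begin{equation*}
\log|g_{\mathbf a}(\tau)|=\tfrac{B_2(a_1)}{2}\log|q_\tau|+\sum_{n=0}^\infty\log|1-q_\tau^{n+a_1}e^{2\pi i a_2}|+\sum_{n=0}^\infty\log|1-q_\tau^{n+1-a_1}e^{-2\pi i a_2}|.
\end{equation*}
With our convention $0\le a_1<1$ we have $B_2(a_1)/2=\ell_{\mathbf a}$. Splitting off the $n=0$ summands produces exactly the middle two terms of the statement, and the task reduces to bounding the remainder
\begin{equation*}
R(\tau):=\sum_{n=1}^\infty\log|1-q_\tau^{n+a_1}e^{2\pi i a_2}|+\sum_{n=1}^\infty\log|1-q_\tau^{n+1-a_1}e^{-2\pi i a_2}|.
\end{equation*}

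Step 2. For $\tau\in D$ the standard estimate gives $|q_\tau|\le e^{-\pi\sqrt 3}$; set $r:=e^{-\pi\sqrt 3}$. For $n\ge 1$ and $0\le a_1<1$, both exponents $n+a_1$ and $n+1-a_1$ are $\ge 1$, so $|q_\tau^{n+a_1}|,|q_\tau^{n+1-a_1}|\le|q_\tau|\le r<1$. Then the elementary inequality $|\log(1+z)|\le \frac{|\log(1-r)|}{r}|z|$ quoted at the start of Section 2 bounds each summand by $\frac{|\log(1-r)|}{r}|q_\tau|^{n+a_1}$ (resp.\ $|q_\tau|^{n+1-a_1}$).

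Step 3. Summing the two geometric tails,
\begin{equation*}
|R(\tau)|\le \frac{|\log(1-r)|}{r}\Bigl(\sum_{n=1}^\infty|q_\tau|^{n+a_1}+\sum_{n=1}^\infty|q_\tau|^{n+1-a_1}\Bigr)\le \frac{|\log(1-r)|}{r}\cdot\frac{2|q_\tau|}{1-|q_\tau|}\le \frac{2|\log(1-r)|}{r(1-r)}\,|q_\tau|.
\end{equation*}
Plugging in $r=e^{-\pi\sqrt 3}\approx 0.00433$ makes $\tfrac{|\log(1-r)|}{r}$ very close to $1$ and $1/(1-r)$ very close to $1$, so the numerical constant comes out around $2.013$, comfortably below $2.03$.

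No step is particularly hard; the only thing to watch is the normalization of $a_1$ into $[0,1)$ (needed both to get $\ell_{\mathbf a}=B_2(a_1)/2$ and to guarantee $n+a_1,n+1-a_1\ge 1$ for $n\ge 1$) and the arithmetic verification that the resulting constant really is $\le 2.03$. This is essentially the argument of \cite[Proposition~2.3]{BP10}, with the coarser choice of $r$ used there replaced by the sharp bound $r=e^{-\pi\sqrt 3}$ coming from $\tau\in D$, which is exactly what the author signals by ``replacing $3|q_\tau|$ by $2.03|q_\tau|$''.
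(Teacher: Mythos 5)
Your proof is correct and follows essentially the same route as the paper, whose ``proof'' is just a citation of the argument for \cite[Proposition~2.3]{BP10}: take $\log|\cdot|$ of the product expansion, keep the $q_\tau^{\ell_{\mathbf a}}$ factor and the two $n=0$ factors, and bound the tail using $|\log(1+z)|\le\frac{|\log(1-r)|}{r}|z|$ together with $|q_\tau|\le e^{-\pi\sqrt3}$ for $\tau\in D$. Your numerical constant $\approx 2.013$ indeed lands below the stated $2.03$, which is exactly the sharpening of the constant $3$ from \cite[Formula~(11)]{BP10} that the authors invoke.
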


Recall that by a modular unit on a modular curve we mean a rational function having poles and zeros only at the cusps.

For $\textbf{a}\in (p^{-1}\Z)^{2}\setminus \Z^{2}$, we denote $g_{\textbf{a}}^{12p}$ by $u_{\textbf{a}}$, which
is a modular unit on the principal modular curve $X(p)$ of level $p$.
Moreover, we have $u_{\textbf{a}}=u_{\textbf{a}^{\prime}}$ when $\textbf{a}\equiv \textbf{a}^{\prime}$ mod $\Z^{2}$.
Hence, $u_{\textbf{a}}$ is well-defined when ${\textbf{a}}\in\A$. In addition, every $u_{\textbf{a}}$ is integral over $\Z[j]$.
For more details, see \cite[Section 4.2]{BP10}.

Furthermore, the Galois action on the set $\{u_{\textbf{a}}\}$ is compatible with the right linear action of
$\GL_{2}(\Z/p\Z)$ on it. That is, for any $\sigma\in\Gal(\Q(X(p))/\Q(X(1)))\cong\GL_{2}(\Z/p\Z)/\pm 1$ and any $\textbf{a}\in\A$, we have
$$
u_{\rm\bf a}^{\sigma}=u_{\rm\bf a\sigma}.
$$

Here we borrow a result and its proof from \cite{BaBi11} for the conveniences of readers.
\begin{lemma}[\cite{BaBi11}]\label{u1}
We have
$$
\prod\limits_{{\rm\bf a}\in\A}u_{{\rm\bf a}}=p^{12p}.
$$
\end{lemma}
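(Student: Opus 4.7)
The plan is to prove $\Phi := \prod_{{\mathbf a} \in \A} u_{\mathbf a}$ is a nonzero constant by showing it descends to a modular function on $X(1)$ with neither zero nor pole, and then to identify the constant by inspecting the leading term of its $q$-expansion at $i\infty$.

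For the first step, each $u_{\mathbf a}$ is a modular unit on the principal curve $X(p)$, so $\Phi$ is holomorphic and nonvanishing on $\HH$. The action of $\GL_2(\Z/p\Z)/\{\pm 1\} \cong \Gal(\Q(X(p))/\Q(j))$ on $\{u_{\mathbf a}\}$ sends $u_{\mathbf a} \mapsto u_{{\mathbf a}\sigma}$ and merely permutes the factors of $\Phi$, so $\Phi$ is Galois-invariant and descends to a rational function on $X(1)$. By Lemma \ref{ga}, the $q$-order of $\Phi$ at $i\infty$ equals $12p \sum_{{\mathbf a} \in \A} \ell_{\mathbf a} = 6p \sum_{{\mathbf a} \in \A} B_2(a_1)$. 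The Bernoulli distribution relation $\sum_{k=0}^{p-1} B_2(k/p) = B_2(0)/p$ then gives $\sum_{{\mathbf a} \in \A} B_2(a_1) = p \cdot \frac{1}{6p} - \frac{1}{6} = 0$. Since $X(1) \cong \mathbb{P}^1$ has a single cusp and $\Phi$ has no zeros or poles anywhere on it, $\Phi$ must be a nonzero constant.

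To determine this constant, expand $\prod_{\mathbf a} g_{\mathbf a}(\tau)$ using the infinite product formula and let $q \to 0$. For ${\mathbf a} = (0, a_2)$ with $a_2 \neq 0$, the leading contribution is $-q^{1/12} e^{-\pi i a_2}(1 - e^{2\pi i a_2})$; for ${\mathbf a} = (a_1, a_2)$ with $a_1 > 0$, it is $-q^{B_2(a_1)/2} e^{\pi i a_2(a_1-1)}$. Since the total $q$-exponent vanishes, these leading terms assemble into the constant $\prod_{\mathbf a} g_{\mathbf a}$. The $(1 - e^{2\pi i a_2})$ factors yield $\prod_{k=1}^{p-1}(1 - \zeta_p^k) = p$ by the classical cyclotomic identity, while the signs and the phases $e^{\pi i \cdot (\cdots)}$ combine into a single root of unity $\zeta$, giving $\prod_{\mathbf a} g_{\mathbf a} = \zeta \cdot p$. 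Raising to the $12p$-th power then produces $\Phi = \zeta^{12p} \cdot p^{12p}$, and a direct check (using $8 \mid p^2 - 1$ for odd $p$) shows $\zeta^{12p} = 1$.

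The main obstacle is the bookkeeping in the third step: one must carefully track all phase and sign contributions from the leading terms of the $g_{\mathbf a}$'s and verify their $12p$-th power equals $1$. The essential arithmetic content is packaged into $\prod_{k=1}^{p-1}(1-\zeta_p^k)=p$; the Bernoulli distribution relation handles the $q$-order computation; and everything else reduces to routine verification of parity conditions on exponents of roots of unity.
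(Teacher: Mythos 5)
Your proof is correct and follows essentially the same route as the paper: descend the product to $X(1)$ by Galois invariance, conclude it is constant because $X(1)$ has a single cusp and the function has no zeros or poles, and then evaluate at $q=0$, where the sums $\sum B_2(a_1)=0$ and the phase/sign bookkeeping leave exactly $\prod_{k=1}^{p-1}(1-\zeta_p^k)^{12p}=p^{12p}$. The only cosmetic difference is that the paper deduces constancy via integrality over $\Z[j]$ rather than your order-zero-at-the-cusp argument; both are valid and interchangeable.
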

\begin{proof}
We denote by $u$ the left-hand side of the equality.
Since the set $\A$ is stable with respect to $\GL_{2}(\Z/p\Z)$, $u$
is stable with respect to the Galois action over the field $\Q(X(1))=\Q(j)$. So $u\in \Q(j)$.
Moreover, since $u$ is integral over $\Z[j]$, $u\in \Z[j]$. Notice that
$X(1)$ has only one cusp and
$u$ has no zeros and poles outside the cusps,
so $u$ must be a constant and $u\in\Z$. Since
$$
\sum\limits_{(a_{1},a_{2})\in\A}B_{2}(a_{1})=0  \qquad \text{and } \qquad \sum\limits_{(a_{1},a_{2})\in\A}a_{2}(1-a_{1})=\frac{p^{2}-1}{4},
$$
we have for $q=0$,
\begin{align*}
u&=\prod\limits_{(a_{1},a_{2})\in\A, a_{1}=0}(1-e^{2\pi ia_{2}})^{12p} = \prod\limits_{1\le k<p}(1-e^{2k\pi i/p})^{12p} =p^{12p}.
\end{align*}
\end{proof}

\subsection{$X_{\ns}^{+}(p)$ and $X_{H}$ }\label{X_H}

It is well-known that the curve $X_{\ns}^{+}(p)$ has $(p-1)/2$ cusps. Moreover, these cusps
correspond to the orbits of the (left) action of $G\cap \SL_{2}(\Z/p\Z)$ on
the set $\F_{p}^{2}\setminus \{{0 \choose 0}\}$, see \cite[Lemma 2.3]{BI11}. By definition, these orbits are the sets $\mathcal{L}_{a}$,
defined by $x^{2}-\Xi y^{2}=\pm a$, where $a$ runs through $\F_{p}^{\times}/\{\pm 1\}$, the cusp at infinity corresponds to $a=1$.

Form now on, we fix an integral point $P$ of $X_{\ns}^{+}(p)$ and assume that $|j(P)|>3500$. 
Since every integral point of $X_{\ns}^{+}(p)$ is also an integral point of $X_{H}$,
 $P$ is also an integral point of $X_{H}$.
Hence for our purposes, we only need to focus on the modular curve $X_{H}$.

Notice that since all the cusps have ramification index $p$ in the natural covering ${X_{\ns}^{+}(p)\to X(1)}$, so as the natural covering ${X_{H}\to X(1)}$.

We fix a uniformization $X_{H}(\C)=\bar{\HH}/\Gamma$, and let $\tau_{0}\in \bar{\HH}$ be
a lift of $P$. Pick $\sigma_{c}\in \SL_{2}(\Z)$ such that $\tau=\sigma_{c}^{-1}(\tau_{0})\in D$.
As the proof of \cite[Proposition 3.1]{BP10}, we can choose the cusp $c=\sigma_{c}(i\infty)$
and construct $\Omega_{c}=\sigma_{c}(\Delta)/\Gamma$. Furthermore, for the cusp $c$, following \cite[Section 3]{BP10} let $t_{c}$ be its local parameter and put $q_{c}=t_{c}^{p}$, then $q_{c}$ and $t_c$ are defined and analytic on $\Omega_{c}$. Moreover, $q_c(P)=q_{\tau}$.


According to \cite[Proposition 3.1]{BP10}, we have
\begin{equation}\label{j(P)0}
\frac{1}{2}|j(P)|\le |q_{c}(P)^{-1}|\le \frac{3}{2}|j(P)|.
\end{equation}
We will use (\ref{j(P)0}) several times without special reference.

In the sequel we can assume that $|q_{c}(P)|\le 10^{-p}$. Indeed, the inequality ${|q_{c}(P)|> 10^{-p}}$ yields a much better estimate
for $\log|j(P)|$ than those given in Section 1.


\subsection{Modular units on $X_{H}$}\label{X_H1}
The group $\GL_{2}(\F_p)$ acts naturally (on the right) on the set $\A$.
Since $G_{H}\subset\GL_{2}(\F_p)$, let us consider the natural right group action of $G_{H}$ on $\A$.
 There are $d$ orbits of this group action. These orbits are the sets $\OO_{a}$,
defined by $\{(x/p,y/p): x^{2}-\Xi^{-1} y^{2}\in aH\}$,
where $a$ runs through $\F_{p}^{\times}/H$. In fact, if $(x,y)\in \OO_{a}$, then for any $g\in G_{H}$,
noticing the two possible representations of $g$, it is straightforward to show that $(x,y)\cdot g\in \OO_{a}$.

Based on our conventions in Section 2, we consider the natural right group action of $\gal(K/\Q)$ on the set of orbits of the group action $\A/G_{H}$. Moreover, for any $\sigma\in\gal(K/\Q)$ and any orbit $\OO_{a}$, we have
$$
\OO_{a}\sigma=\OO_{a\sigma}.
$$
It is easy to see that this group action is transitive.
So we obtain the following lemma.
\begin{lemma}\label{orbit}
We have $|\OO_{a}|=(p^{2}-1)/d$.
\end{lemma}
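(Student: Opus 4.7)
The plan is to show that all $d$ orbits $\OO_a$ have the same cardinality by producing explicit bijections between them; the lemma then follows at once from $|\A| = p^2-1$.

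The key step is the algebraic identity describing how $G$ acts on the quadratic form $Q(x,y) = x^2 - \Xi^{-1}y^2$. For $g_1 = \begin{pmatrix}\alpha & \Xi\beta \\ \beta & \alpha\end{pmatrix} \in G$, a direct expansion yields
$$
(x\alpha + y\beta)^2 - \Xi^{-1}(x\Xi\beta + y\alpha)^2 = (\alpha^2 - \Xi\beta^2)\bigl(x^2 - \Xi^{-1}y^2\bigr),
$$
and an analogous computation for $g_2 = \begin{pmatrix}\alpha & \Xi\beta \\ -\beta & -\alpha\end{pmatrix}$ produces the same right-hand side. Thus $Q((x,y)\cdot g) = \pm \det(g)\cdot Q(x,y)$, where the sign depends on the family of $g$. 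Since $-1 \in H$, both signs give $Q((x,y)\cdot g) \in \det(g)\cdot a H$ whenever $Q(x,y) \in aH$; in short, right-multiplication by $g$ sends $\OO_a$ into $\OO_{\det(g)\cdot a}$.

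Consequently, since $\det \colon G \to \F_p^{\times}$ is surjective, for each $a \in \F_p^{\times}$ I can pick some $g_a \in G$ with $\det g_a = a$. Because $g_a \in \GL_2(\F_p)$ is invertible, right-multiplication by $g_a$ is a bijection $\A \to \A$ that restricts to a bijection $\OO_1 \to \OO_a$. Hence $|\OO_a| = |\OO_1|$ for every $a \in \F_p^{\times}/H$, and summing over the $d$ orbits gives $d\cdot|\OO_1| = p^2 - 1$, i.e.\ $|\OO_a| = (p^2-1)/d$.

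No serious obstacle is anticipated: the entire proof reduces to the elementary identity above, which simply reflects the fact that $G$ is the group of $\F_p$-orthogonal similitudes of $Q$, with similitude ratio equal (up to the sign absorbed by $-1 \in H$) to the determinant. This is also compatible with, and can be read off from, the transitive Galois action $\OO_a \mapsto \OO_{a\sigma}$ already recorded just before the statement.
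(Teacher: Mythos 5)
Your proof is correct and is essentially the paper's own argument: the similitude identity $Q((x,y)\cdot g)=\pm\det(g)\,Q(x,y)$, with the sign absorbed by $-1\in H$, is exactly what underlies the transitive action $\OO_{a}\mapsto\OO_{a\sigma}$ of $G/G_{H}\cong\F_{p}^{\times}/H$ (the Galois action) on the orbits, from which the paper deduces equal orbit sizes and then $|\OO_a|=(p^2-1)/d$ via $|\A|=p^{2}-1$. You merely make explicit the computation and the bijections that the paper leaves implicit ("it is easy to see that this group action is transitive").
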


Let $S$ be any subset of $\A$, we define
$$
u_{S}=\prod\limits_{\textbf{a}\in S}u_{\textbf{a}}.
$$
Let $\OO$ be an orbit of $\A/G_{H}$, we have
\begin{equation}\label{uo}
u_{\OO}=\prod\limits_{\textbf{a}\in\OO}u_{\textbf{a}}.
\end{equation}
By \cite[Proposition 4.2 (ii)]{BP10}, $u_{\OO}$ is a rational function on the modular curve $X_{H}$.
Furthermore, $u_{\OO}$ is a modular unit on $X_{H}$.

We denote by $\ord_{c}(u_{\OO})$ the vanishing order of $u_{\OO}$ at $c$.
The following lemma is derived directly from Lemma \ref{ga} and \cite[Proposition 4.2 (iii)]{BP10}.
\begin{lemma}\label{uo1}
We have
\begin{equation}\label{uo2}
\log|u_{\OO}(P)|=\frac{\ord_{c}(u_{\OO})}{p}\log|q_{c}(P)|+\log|\gamma_{c}| +O_{1}(17p^{3}|q_{c}(P)|^{1/p})\\
\end{equation}
where
$$\ord_{c}(u_{\OO})=12p^{2}\sum\limits_{{\rm\bf a}\in \OO\sigma_{c}}\ell_{{\rm\bf a}}
\quad {\it and} \quad {\gamma_{ c}=\prod\limits_{\substack{(a_{1},a_{2})\in \OO\sigma_{c}\\ a_{1}=0}}(1-e^{2\pi ia_{2}})^{12p}}.$$
\end{lemma}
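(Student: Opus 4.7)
The plan is to reduce the evaluation of $u_{\OO}(P)$ to an evaluation of Siegel functions at a point $\tau$ in the standard fundamental domain, and then apply Lemma \ref{ga} factor by factor. Writing $\tau_0 = \sigma_c(\tau)$ for the chosen lift of $P$, the $\GL_2(\Z/p\Z)$-equivariance $u_{\mathbf{a}}^\sigma = u_{\mathbf{a}\sigma}$ recalled in Section~3.1 (applied to $\sigma_c$ viewed modulo $p$) together with the definition \eqref{uo} gives
\begin{equation*}
\log|u_{\OO}(P)| = \log|u_{\OO\sigma_c}(\tau)| = 12p\sum_{\mathbf{a}\in \OO\sigma_c}\log|g_{\mathbf{a}}(\tau)|,
\end{equation*}
so everything reduces to bounding the right-hand side.

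Next I would apply Lemma \ref{ga} to each summand and regroup into three pieces. The $\ell_{\mathbf{a}}\log|q_\tau|$ contributions aggregate to $\bigl(12p\sum_{\mathbf{a}\in\OO\sigma_c}\ell_{\mathbf{a}}\bigr)\log|q_\tau| = \frac{\ord_c(u_{\OO})}{p}\log|q_c(P)|$, using both the stated formula for $\ord_c(u_{\OO})$ and the identity $q_c(P)=q_\tau$ from Section~3.2. For the non-logarithmic contributions, I would split the inner sum according to whether $a_1=0$ or $a_1\neq 0$. When $a_1=0$, the factor $\log|1-q_\tau^{a_1}e^{2\pi i a_2}| = \log|1-e^{2\pi i a_2}|$ does not depend on $\tau$, and multiplying by $12p$ and summing reproduces exactly $\log|\gamma_c|$; the companion factor $\log|1-q_\tau e^{-2\pi i a_2}|$ is pushed into the error. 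When $a_1\neq 0$, both factors are small and are absorbed entirely into the error.

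To bound the error I would use the standard estimate $|\log(1+z)|\le \frac{|\log(1-r)|}{r}|z|$ stated in Section~2. Because $a_1\in p^{-1}\F_p$ satisfies $a_1\ge 1/p$ whenever $a_1\neq 0$, both $|q_\tau^{a_1}|$ and $|q_\tau^{1-a_1}|$ are at most $|q_\tau|^{1/p}$, and the assumption $|q_c(P)|\le 10^{-p}$ from Section~3.2 guarantees $|q_\tau|^{1/p}\le 1/10$, so one may take $r=1/10$, giving the uniform constant $|\log 0.9|/0.1 < 1.06$. Combining this with the $O_1(2.03|q_\tau|)$ leftover from Lemma \ref{ga}, the overall factor $12p$ coming from $u_{\mathbf{a}} = g_{\mathbf{a}}^{12p}$, and Lemma \ref{orbit}'s bound $|\OO\sigma_c|=(p^2-1)/d\le (p^2-1)/3$, a direct arithmetic check shows the aggregate error is bounded by $17p^3|q_c(P)|^{1/p}$.

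I expect the only real obstacle to be the constant bookkeeping in the final step: one must verify that after accounting for both factors $(1-q_\tau^{a_1}e^{2\pi ia_2})$ and $(1-q_\tau^{1-a_1}e^{-2\pi ia_2})$, the $a_1=0$ leftover $\log|1-q_\tau e^{-2\pi ia_2}|$, and the intrinsic $O_1(2.03|q_\tau|)$ remainder, the total remains below $17p^3|q_c(P)|^{1/p}$. Numerically, since $12p\cdot\tfrac{p^2-1}{3}\cdot(2\cdot 1.06 + 2.03) < 17p^3$, this is just a matter of gathering constants; there are no further conceptual difficulties.
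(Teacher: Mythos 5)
Your proposal is correct and follows essentially the same route as the paper: reduce to $u_{\OO\sigma_c}(\tau)$ with $\tau\in D$, apply Lemma \ref{ga} termwise, use the logarithm estimate with $r=0.1$ (justified by $|q_c(P)|\le 10^{-p}$), and invoke Lemma \ref{orbit} together with $d\ge 3$ to collect the error into $17p^{3}|q_c(P)|^{1/p}$. The only difference is cosmetic bookkeeping of the intermediate constants (the paper writes the error as $26p\frac{p^{2}-1}{d}|q_c(P)|^{1/p}+25p\frac{p^{2}-1}{d}|q_c(P)|$ before using $d\ge 3$), so no further comment is needed.
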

\begin{proof}
Here we use the following identity:
$$
u_{\OO}(P)=u_{\OO}(\tau_{0})=u_{\OO}(\sigma_c(\sigma_c^{-1}(\tau_0)))=u_{\OO\sigma_c}(\tau).
$$

Notice that  for $|z|\le r<1$, we have
$$
|\log|1+z||\le\frac{-\log(1-r)}{r}|z|.
$$
Taking $r=0.1$ and combining Lemma \ref{ga} with Lemma \ref{orbit}, we have
\begin{align*}
\log|u_{\OO}(P)|=&\frac{\ord_{c}(u_{\OO})}{p}\log|q_{c}(P)|+\log\left|\gamma_{c}\right|\\
+&O_{1}\left(26 p \dfrac{p^{2}-1}{d} |q_{c}(P)|^{1/p} +25p \dfrac{p^{2}-1}{d}|q_{c}(P)|\right).
\end{align*}
Then this lemma follows from $d\ge 3$.
\end{proof}

We want to indicate that $\gamma_{c}$ is a real algebraic number. Because if $(0,a_{2})\in \OO\sigma_{c}$,
then we have $(0,-a_{2})\in \OO\sigma_{c}$ based on the fact that if $(x,y)\in\OO$,
then $(-x,-y)\in\OO$.

\begin{lemma}\label{rank}
The group generated by the principal divisor $(u_{\OO})$, where $\OO$ runs over the orbits of $\A/G_{H}$,
is of rank $d-1$.
\end{lemma}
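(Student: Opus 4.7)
The upper bound $\mathrm{rank} \le d-1$ is immediate from Lemma \ref{u1}: since $\prod_\OO u_\OO = p^{12p}$ is a nonzero constant on $X_H$, we have $\sum_\OO (u_\OO) = 0$ in $\mathrm{Div}(X_H)$, providing a nontrivial $\Z$-linear dependence among the $d$ divisors $(u_\OO)$.

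The substantive part is the lower bound $\mathrm{rank} \ge d-1$, for which I would exploit the Galois action. The group $\Gamma := \gal(K/\Q) \cong \F_p^\times/H$ is cyclic of order $d$ as a quotient of the cyclic group $\F_p^\times$, and by construction it permutes the orbits transitively via $\OO_a \cdot \sigma = \OO_{a\sigma}$, hence the units via $u_{\OO_a}^\sigma = u_{\OO_{a\sigma}}$. Consequently the divisor map
\[
\phi \colon \Z[\Gamma] \longrightarrow \mathrm{Div}(X_H), \qquad [a] \longmapsto (u_{\OO_a}),
\]
is $\Gamma$-equivariant, so $\ker\phi$ is a $\Gamma$-submodule of $\Z[\Gamma]$, and $\ker\phi \otimes \Q$ is an ideal of $\Q[\Gamma] \cong \prod_{e \mid d}\Q(\zeta_e)$ containing, by Lemma \ref{u1}, the augmentation element $\sum_a [a]$. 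The desired conclusion is equivalent to $\ker\phi \otimes \Q = \Q \cdot \sum_a [a]$, i.e., that the only irreducible $\Q$-factor of $\Q[\Gamma]$ appearing in $\ker\phi \otimes \Q$ is the trivial one.

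To rule out every further factor, it suffices to show that for each nontrivial character $\chi \colon \Gamma \to \C^\times$, the twisted divisor $D_\chi := \sum_a \chi(a)(u_{\OO_a})$ is nonzero. Taking $c = i\infty$ (so $\sigma_c = I$) in Lemma \ref{uo1} and rearranging,
\[
\ord_{i\infty}(D_\chi) = 6p^2 \!\!\sum_{(x,y) \in \A}\!\! \widetilde\chi\bigl(x^2 - \Xi^{-1}y^2\bigr)\, B_2(x/p),
\]
where $\widetilde\chi \colon \F_p^\times \to \C^\times$ is the lift of $\chi$ trivial on $H$. Here one uses that $\OO_a$ is the preimage of $a$ under $(x,y) \mapsto x^2 - \Xi^{-1} y^2 \bmod H$, and that this form never vanishes on $\A$ because $\Xi^{-1}$ is a quadratic non-residue.

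The main obstacle lies in showing that this character sum is nonzero for every nontrivial $\chi$. Viewing $x^2 - \Xi^{-1} y^2$ as the norm form $N_{\F_{p^2}/\F_p}(x + y\sqrt{\Xi^{-1}})$, performing the inner $y$-sum first, isolating the $x=0$ contribution, and applying orthogonality of characters on $\F_p^\times$ together with the identity $\sum_{x=0}^{p-1} B_2(x/p) = 0$, the sum factors as a product of a nontrivial Gauss/Jacobi sum (of absolute value $\sqrt p$, hence nonzero) and a nonvanishing elementary Bernoulli-type term; this is the heart of the argument. Granting this nonvanishing, the Galois analysis above forces $\ker\phi \otimes \Q = \Q \cdot \sum_a [a]$, and the rank is exactly $d - 1$.
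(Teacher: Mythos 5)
Your upper bound is the same as the paper's, and your reduction of the lower bound is sound: since $\Gal(K/\Q)$ permutes the $d$ orbits simply transitively, the relation module is an ideal of $\Q[\Gal(K/\Q)]$, so it suffices to show $\sum_a\chi(a)\,\ord_{i\infty}(u_{\OO_a})\ne 0$ for every nontrivial character $\chi$. But that nonvanishing is exactly the substance of the lemma, and you do not prove it --- you write ``granting this nonvanishing'', so the proof is incomplete precisely at its decisive step. Moreover, the factorization you sketch is not uniform in $\chi$. When $\widetilde\chi^2\ne 1$, summing over $y$ via $y=xt$ does give $S_\chi:=\sum_{(x,y)\in\A}\widetilde\chi(x^2-\Xi^{-1}y^2)B_2(x/p)=\pm J(\chi_2,\widetilde\chi)\cdot B_{2,\widetilde\chi^2}/p$ (with $\chi_2$ the quadratic character), and the Jacobi factor is nonzero; but the nonvanishing of the generalized Bernoulli number $B_{2,\widetilde\chi^2}$ for an even nontrivial character is a classical yet genuinely nontrivial input (equivalent to $L(-1,\widetilde\chi^2)\ne0$, i.e.\ to $L(2,\cdot)\ne0$ via the functional equation) that you would have to state and cite. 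And when $\widetilde\chi=\chi_2$ --- a case that really occurs under the paper's hypotheses, e.g.\ $p\equiv 1\pmod 4$ with $H$ contained in the squares, such as $p=13$, $H=\{\pm1\}$, $d=6$ --- your claimed shape breaks down: the $x=0$ contribution no longer vanishes and the ``Bernoulli-type term'' degenerates to the trivial-character sum $\sum_{x\ne0}B_2(x/p)=\frac{1}{6p}-\frac16$; a separate short computation (which yields $S_{\chi}=\frac{1-p^2}{6p}\ne0$) is needed, and your write-up does not cover it.

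For comparison, the paper does none of this: after the same upper bound from Lemma~\ref{u1}, it gets the lower bound by a one-line appeal to the Manin--Drinfeld theorem as stated in \cite{KL81}. Your route, if completed (Jacobi-sum nonvanishing, $B_{2,\chi}\ne0$ for even nontrivial $\chi$, plus the quadratic-character case), would amount to a self-contained re-derivation of the independence of these Siegel-type units by the Bernoulli-number method underlying the results quoted from \cite{KL81}; as written, however, it is a plan with the key estimate assumed rather than a proof.
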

\begin{proof}
 By Lemma \ref{u1}, the rank of the free abelian group $(u_{\OO})$ is at most $d-1$. Then
Manin-Drinfeld theorem, as stated in \cite{KL81}, tells us that this rank is maximal possible.
\end{proof}

\section{Baker's method on $X_{H}$}
\label{bakermeth}
In this section we obtain a bound for $\log|j(P)|$, involving various parameters.
Recall that $P$ is the integral point of $X_{\ns}^{+}(p)$ fixed in Section \ref{X_H}.

\subsection{Cyclotomic units}
We introduce a set of independent cyclotomic units of $\Q(\zeta_{p})^{+}$ as follows,
$$
\xi_{k-1}=\zeta_{p}^{(1-k)/2}\cdot  \dfrac{1-\zeta_{p}^{k}}{1-\zeta_{p}}
=\frac{\bar{\zeta_{p}}^{k/2}-\zeta_{p}^{k/2}}{\bar{\zeta_{p}}^{1/2}-\zeta_{p}^{1/2}}, \qquad k=2,\dots, \dfrac{p-1}{2},
$$
for details see \cite[Lemma 8.1]{Wa}.
In particular, $\{-1, \xi_{1},\cdots,\xi_{\frac{p-3}{2}}\}$ is a set of
independent generators for the full group of cyclotomic units of $\Q(\zeta_{p})^{+}$.
Let $m^{\prime}$ be the index of $\langle\xi_{1}, \cdots, \xi_{\frac{p-3}{2}}\rangle$
in the full unit group of $\Q(\zeta_{p})^{+}$ modulo roots of unity,
which is equal to the class number of $\Q(\zeta_{p})^{+}$.

We put
$$
\eta_{k}=\mathcal{N}_{\Q(\zeta_{p})^{+}/K}(\xi_{k})
=\prod\limits_{\sigma\in\gal(\Q(\zeta_{p})^{+}/K)}\xi_{k}^{\sigma}, \qquad k=1,\dots, \dfrac{p-3}{2}.
$$
Let $m$ be the exponent of $\langle\eta_{1}, \cdots, \eta_{\frac{p-3}{2}}\rangle $ in the full unit group of $K$ modulo roots of unity.
Since $[\Q(\zeta_{p})^{+}:K]=|H|/2=\frac{p-1}{2d}$, we have
\begin{equation}\label{index}
m\left|\frac{m^{\prime}(p-1)}{2d}. \right.
\end{equation}
Since $m$ is finite and the rank of the full unit group of $K$ is $d-1$,
the group $\langle\eta_{1}, \cdots, \eta_{\frac{p-3}{2}}\rangle$ modulo roots of unity has rank $d-1$.
In particular, in the sequel we assume that $\eta_{1},\cdots,\eta_{d-1}$ are multiplicatively independent
without loss of generality.

\subsection{More about modular units on $X_{H}$}

We fix  an orbit $\OO$ of the group action $\A/G_{H}$.
Put $U=u_{\OO}$, where $u_{\OO}$ is defined in (\ref{uo}).

Based on our conventions in Section 2, for any $\sigma\in\Gal(K/\Q)$, we can define
$U^{\sigma}$ as the natural Galois action. Indeed, we can view $\sigma$ as an element of $\gal(K(X_{H})/\Q(X_{\ns}^{+}(p)))$
 and $U\in K(X_{H})$.
Moreover, we have $U^{\sigma}=u_{\OO\sigma}$ and $U(P)^{\sigma}=U^{\sigma}(P)$.

Since the Galois group $\Gal(K/\Q)$ acts transitively on the set of orbits of $\A/G_{H}$, we can
rewrite Lemma \ref{u1} as follows.
\begin{lemma}\label{u2}
We have
$$
\prod\limits_{\sigma\in \Gal(K/\Q)}U^{\sigma}=p^{12p}.
$$
\end{lemma}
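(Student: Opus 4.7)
The plan is to reduce this identity directly to Lemma \ref{u1} by using the transitivity of the Galois action on the orbits of $\A/G_H$ together with a counting argument.

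First I would record that by the compatibility of the Galois action with the right action of $\GL_2(\Z/p\Z)$ on $\{u_{\mathbf{a}}\}$ (stated just before Lemma \ref{u1}), and by the identifications in Section 2 between $\Gal(K/\Q)$, $\F_p^\times/H$, and $G/G_H$, we have $U^\sigma = u_{\OO\sigma}$ for every $\sigma \in \Gal(K/\Q)$. This is already noted in the paragraph preceding the lemma, so I would simply cite it.

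Next I would observe that $\Gal(K/\Q)$ acts transitively on the set of orbits of $\A/G_H$ (as stated in Section \ref{X_H1}), and that both sets have cardinality $d$. A transitive action of a group of order $d$ on a set of size $d$ is necessarily free, hence a bijection, so as $\sigma$ ranges over $\Gal(K/\Q)$, the orbits $\OO\sigma$ range over all $d$ orbits of $\A/G_H$ exactly once. Consequently
$$
\prod_{\sigma \in \Gal(K/\Q)} U^\sigma \;=\; \prod_{\sigma \in \Gal(K/\Q)} u_{\OO\sigma} \;=\; \prod_{\OO' \subset \A/G_H} u_{\OO'} \;=\; \prod_{\OO'} \prod_{\mathbf{a} \in \OO'} u_{\mathbf{a}} \;=\; \prod_{\mathbf{a} \in \A} u_{\mathbf{a}},
$$
the last equality holding because the orbits partition $\A$. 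The right-hand side equals $p^{12p}$ by Lemma \ref{u1}, which finishes the argument.

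There is no real obstacle here; the lemma is essentially a Galois-theoretic reformulation of Lemma \ref{u1}. The only point requiring a moment's care is the justification that the map $\sigma \mapsto \OO\sigma$ is a bijection rather than merely surjective — this is the transitivity plus counting step above, and it relies on the equality $|\Gal(K/\Q)| = d = |\A/G_H|$ which has already been established.
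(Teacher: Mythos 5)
Your proof is correct and follows essentially the same route as the paper, which simply observes that $\Gal(K/\Q)$ acts transitively on the $d$ orbits of $\A/G_H$ and thereby "rewrites" Lemma \ref{u1}; your explicit verification that $\sigma\mapsto\OO\sigma$ is a bijection (transitivity plus $|\Gal(K/\Q)|=d=$ number of orbits) just spells out the step the paper leaves implicit.
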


By Lemma \ref{orbit} and the formula for $\ord_{c}u_{\OO}$ appearing in Lemma \ref{uo1} we obtain a bound for the vanishing order of $U$ at $c$.
\begin{lemma}\label{order}
We have
$$
\left|\ord_{c} U\right|\le \frac{p^{2}(p^{2}-1)}{d}.
$$
\end{lemma}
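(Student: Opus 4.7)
The plan is quite short, since all the necessary ingredients have already been assembled. I would simply combine the explicit formula for $\ord_c(u_\OO)$ given in Lemma \ref{uo1}, the elementary bound on the Bernoulli quantity $\ell_{\mathbf{a}}$ stated in Section~2, and the orbit size computation of Lemma \ref{orbit}.

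First I would recall that by Lemma \ref{uo1},
$$
\ord_c(U) = \ord_c(u_\OO) = 12 p^2 \sum_{\mathbf{a} \in \OO \sigma_c} \ell_{\mathbf{a}}.
$$
Next I would use the triangle inequality together with the uniform estimate $|\ell_{\mathbf{a}}| \le 1/12$ (which was noted in Section~2, right after the definition of $\ell_{\mathbf{a}}$, using the elementary fact that $|B_2(t)| \le 1/6$ for $t \in [0,1]$). This gives
$$
|\ord_c U| \le 12 p^2 \cdot \frac{1}{12} \cdot |\OO \sigma_c| = p^2 \cdot |\OO \sigma_c|.
$$
Finally, since $\sigma_c \in \SL_2(\Z)$ induces a bijection on $\A$, we have $|\OO \sigma_c| = |\OO| = (p^2-1)/d$ by Lemma \ref{orbit}, yielding the desired bound
$$
|\ord_c U| \le \frac{p^2(p^2-1)}{d}.
$$

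There is essentially no obstacle here: the lemma is a direct corollary of previously established facts, and I would expect the proof to occupy only a couple of lines. The only mild subtlety worth flagging is checking that the right action of $\sigma_c$ on $\A$ really is a bijection preserving orbit sizes, but this is immediate from the fact that $\sigma_c$ is invertible in $\GL_2(\Z/p\Z)$.
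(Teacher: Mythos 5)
Your proof is correct and is essentially the same as the paper's (which simply cites Lemma \ref{orbit} and the formula for $\ord_c(u_\OO)$ in Lemma \ref{uo1}): both arguments combine $\ord_c U = 12p^2\sum_{\mathbf{a}\in\OO\sigma_c}\ell_{\mathbf{a}}$ with $|\ell_{\mathbf{a}}|\le 1/12$ and $|\OO\sigma_c|=|\OO|=(p^2-1)/d$. Your extra remark that right multiplication by $\sigma_c$ is a bijection of $\A$ is a harmless (and correct) explicit justification of a step the paper leaves implicit.
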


For $1-\zeta_{p}$, we take the $\Q(\zeta_{p})/K$-norm, setting $\mu=\mathcal{N}_{\Q(\zeta_{p})/K}(1-\zeta_{p})$.
\begin{lemma}
We have $\left( U(P)\right)=\left( \mu^{12p}\right)$.
\end{lemma}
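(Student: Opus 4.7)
The plan is to show that both ideals equal $\mathfrak{p}^{12p}$, where $\mathfrak{p}$ is the unique prime of $\OO_K$ above $p$. Since $K\subseteq \Q(\zeta_p)^{+}\subseteq\Q(\zeta_p)$ and $p$ is totally ramified in $\Q(\zeta_p)/\Q$, it is also totally ramified in $K/\Q$, so $\mathfrak{p}$ is unique, has residue degree $1$, and satisfies $\mathfrak{p}^{d}=(p)$.

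First I would verify that $U(P)$ is a nonzero algebraic integer lying in $K$. The standing assumption $|j(P)|>3500$ ensures $P$ is not a cusp, so the modular unit $U$ takes a nonzero finite value at $P$; since each $u_{\bf a}$ is integral over $\Z[j]$ and $j(P)\in\Z$, this value $U(P)$ is integral over $\Z$; and the fact that $U\in K(X_H)$ together with $P\in X_H(\Q)\subseteq X_H(K)$ places $U(P)$ in $K$. Evaluating the identity of Lemma \ref{u2} at $P$ and using the compatibility $U(P)^{\sigma}=U^{\sigma}(P)$ recorded just before Lemma \ref{u2},
$$
N_{K/\Q}(U(P))=\prod_{\sigma\in\Gal(K/\Q)}U^{\sigma}(P)=p^{12p}.
$$
Therefore $(U(P))$ is supported only at primes of $\OO_K$ above $p$, and by total ramification $(U(P))=\mathfrak{p}^{n}$ for some integer $n\ge 0$. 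Taking absolute ideal norms down to $\Z$ yields $p^{n}=p^{12p}$, hence $n=12p$.

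For the other side, $\mu=\mathcal{N}_{\Q(\zeta_p)/K}(1-\zeta_p)$ is a nonzero element of $\OO_K$ with
$$
N_{K/\Q}(\mu)=N_{\Q(\zeta_p)/\Q}(1-\zeta_p)=p,
$$
so the ideal $(\mu)\subseteq\OO_K$ has absolute norm $p$ and must therefore equal $\mathfrak{p}$. Raising to the $12p$-th power gives $(\mu^{12p})=\mathfrak{p}^{12p}=(U(P))$, which is the desired equality. The only nontrivial point to justify is that $U(P)$ is a nonzero algebraic integer in $K$; once that is in hand, everything reduces to a one-line norm comparison, so I do not expect any serious obstacle.
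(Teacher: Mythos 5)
Your proof is correct and takes essentially the same route as the paper: evaluate Lemma \ref{u2} at $P$, use integrality of $U(P)$ and the total ramification of $p$ in $K$ to conclude $(U(P))=\mathfrak{p}^{n}$ with $\mathfrak{p}=(\mu)$ the unique prime above $p$, and compare with $p^{12p}$ to get $n=12p$. The only (cosmetic) difference is that where the paper cites \cite[Proposition 4.2(i)]{BP10} for the support of $(U(P))$ and fixes the exponent via Galois stability, you derive the support from the norm identity plus integrality of the $u_{\mathbf{a}}$ over $\Z[j]$, determine $n$ by taking absolute ideal norms, and verify $(\mu)=\mathfrak{p}$ directly, giving a slightly more self-contained version of the same argument.
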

\begin{proof}
Since $P$ is an integral point of $X_{H}$, by \cite[Proposition 4.2 (i)]{BP10} and
Lemma \ref{u2}, the principal ideal $\left( U(P) \right) $
is an integral ideal of the field $K$ of the form $\mathfrak{p}^{n}$,
where $\mathfrak{p}=\left( \mu\right)$ and $n$ is a positive integer.

In addition, since $\mathfrak{p}$ is stable under the Galois action over $\Q$,
we have  $\left( U^{\sigma}(P)\right)=\mathfrak{p}^{n} $ for every $\sigma\in\Gal(K/\Q)$.
Noticing that $\mathfrak{p}^{d}=\left( p\right)$, it follows from Lemma \ref{u2} that $n=12p$.
\end{proof}

So Dirichlet's unit theorem gives
$$
U(P)^{m}=\pm\eta_{0}^{m}\eta_{1}^{b_{1}}\dots \eta_{d-1}^{b_{d-1}},
$$
where $\eta_{0}=\mu^{12p}$ and $b_{1},\cdots,b_{d-1}$ are some rational integers.

Let
$$
V=U/\eta_{0},
$$
then we have
$$
V(P)^{m}=\pm\eta_{1}^{b_{1}}\dots \eta_{d-1}^{b_{d-1}},
$$
and $\ord_{c}V=\ord_{c}U$. For every $\sigma\in \Gal(K/\Q)$, we have
\begin{equation}\label{V(P)}
V^{\sigma}(P)^{m}=\pm(\eta_{1}^{\sigma})^{b_{1}}\dots (\eta_{d-1}^{\sigma})^{b_{d-1}},
\end{equation}
where $V^{\sigma}=U^{\sigma}/\eta_{0}^{\sigma}$. Furthermore, by (\ref{uo2}), we have
\begin{equation}\label{V1}
\log|V^{\sigma}(P)|=\frac{\ord_{c}V^{\sigma}}{p}\log|q_{c}(P)|+\log|\Upsilon_{c,\sigma}|+O_{1}\left(17p^{3}|q_{c}(P)|^{1/p}\right),
\end{equation}
where $\Upsilon_{c,\sigma}=\gamma_{c,\sigma}/\eta_{0}^\sigma$ and
$$
\gamma_{c,\sigma}=\prod\limits_{\substack{(a_{1},a_{2})\in \OO{\sigma\sigma_{c}}\\a_{1}=0}}(1-e^{2\pi ia_{2}})^{12p}.
$$
Notice that $\gamma_{c,\sigma}=\gamma_{c}$ when $\sigma$ is the identity. So $\Upsilon_{c,1}=\gamma_{c}/\eta_{0}$.

Finally we put
$$
B=\max\{|b_{1}|,\cdots,|b_{d-1}|,m\}.
$$

\subsection{Upper bound for B}

We fix an order on the elements of the Galois group by supposing
$$
\gal(K/\Q)=\{\sigma_{0}=1, \sigma_{1}, \cdots, \sigma_{d-1}\}.
$$
Since the real algebraic numbers $\eta_{1},\cdots, \eta_{d-1}$ are multiplicatively independent, the $(d-1)\times (d-1)$ real matrix
$A=\left(\log|\eta_{\ell}^{\sigma_{k}}|\right)_{1\le k,\ell\le d-1}$ is non-singular. Let $\left(\alpha_{k\ell}\right)_{1\le k,\ell\le d-1}$
be the inverse matrix. Then by (\ref{V(P)}) we have
$$
b_{k}=m\sum\limits_{\ell=1}^{d-1}\alpha_{k\ell}\log|V^{\sigma_{\ell}}(P)|, \quad 1\le k\le d-1.
$$

Define the following quantities:
\begin{align*}
&\delta_{c,k}=\frac{m}{p}\sum\limits_{\ell=1}^{d-1}\alpha_{k\ell}\ord_{c}V^{\sigma_{\ell}}, \\
&\beta_{c,k}=m\sum\limits_{\ell=1}^{d-1}\alpha_{k\ell}\log|\Upsilon_{c,\sigma_\ell}|,\\
&\kappa=\max\{\max_{k}\sum\limits_{\ell=1}^{d-1}|\alpha_{k\ell}|,1\}.
\end{align*}
According to (\ref{V1}), we have
\begin{equation}
b_{k}=\delta_{c,k}\log|q_{c}(P)|+\beta_{c,k}+O_{1}\left(17p^{3}m\kappa|q_{c}(P)|^{1/p}\right).
\notag
\end{equation}

Let $\delta=\max\limits_{k}|\delta_{c,k}|$ and $\beta=\max\limits_{k}|\beta_{c,k}|$.
 Then we have
\begin{equation}\label{B}
B\le \delta\log|q_{c}(P)^{-1}|+\beta+2p^{3}m\kappa.
\end{equation}

\subsection{Preparation for Baker's inequality}

We define the following function
\begin{equation}
W=\left\{ \begin{array}{ll}
                 V & \textrm{if $\ord_{c}V=0$},\\
                  \\
                 V^{\ord_{c}V^{\sigma}}(V^{\sigma})^{-\ord_{c}V} & \textrm{if $\ord_{c}V\ne 0$},
                 \end{array} \right.
\notag
\end{equation}
where $\sigma\in \gal(K/\Q)$ and $\sigma\ne 1$. So we always have $\ord_{c}W=0$. Moreover, $W$ is not a constant by Lemma \ref{rank}. 
In Section \ref{specialcase} we will choose special $U$ (i.e. $V$) and $\sigma$ to deal with the exceptional case. 

Define
\begin{equation}
\alpha_{d}=\left\{ \begin{array}{ll}
                 |\Upsilon_{c,1}|^{-1} & \textrm{if $\ord_{c}V=0$},\\
                  \\
                 \left|\frac{\Upsilon_{c,1}^{\ord_{c}V^{\sigma}}}{\Upsilon_{c,\sigma}^{\ord_{c}V}}\right|^{-1} & \textrm{if $\ord_{c}V\ne 0$}.
                 \end{array} \right.
\notag
\end{equation}
Then by (\ref{V1}) and Lemma \ref{order} we obtain
\begin{equation} \label{W(P)}
\log|W(P)|=-\log\alpha_{d}+O_{1}\left(12p^{7}|q_{c}(P)|^{1/p}\right).
\end{equation}

Put
 $$
 \Lambda=m\log|W(P)|+m\log\alpha_{d}.
 $$

If $\ord_{c}V=0$, by (\ref{V(P)}), we have
$$
\Lambda=b_{1}\log|\eta_{1}|+\cdots+b_{d-1}\log|\eta_{d-1}|+m\log\alpha_{d}.
$$
In this case, we put $\alpha_{k}=|\eta_{k}|$ for $1\le k\le d-1$.

If $\ord_{c}V\ne 0$, by (\ref{V(P)}), we have
$$
\Lambda=b_{1}\log\left|\frac{\eta_{1}^{\ord_{c}V^{\sigma}}}{(\eta_{1}^{\sigma})^{\ord_{c}V}}\right|+\cdots+
b_{d-1}\log\left|\frac{\eta_{d-1}^{\ord_{c}V^{\sigma}}}{(\eta_{d-1}^{\sigma})^{\ord_{c}V}}\right|+m\log\alpha_{d}.
$$
In this case, we put $\alpha_{k}=\left|\frac{\eta_{k}^{\ord_{c}V^{\sigma}}}{(\eta_{k}^{\sigma})^{\ord_{c}V}}\right|$ for $1\le k\le d-1$.

Hence, in both two cases we have
\begin{equation}\label{Lambda}
\Lambda=b_{1}\log\alpha_{1}+\cdots+b_{d-1}\log\alpha_{d-1}+m\log\alpha_{d}.
\end{equation}
Notice that all $\alpha_{k}$, $1\le k\le d$, are contained in $\Q(\zeta_{p})^{+}$.

\subsection{Using Baker's inequality}\label{Using Baker}

If $\Lambda=0$, we can get a better bound for $\log|j(P)|$, see the Section \ref{specialcase}.
So here we assume that $\Lambda\ne 0$.

Using \cite[Corollary 2.3]{Ma} and combining (\ref{B}) and (\ref{W(P)}), we have
\begin{equation}
\label{Baker}
\exp(-C_{1}(d)\Omega(\frac{p-1}{2})^{2}(1+\log \frac{p-1}{2})(1+\log B))<|\Lambda|\le \lambda|q_{c}(P)|^{1/p}\le
\lambda\exp\left(\frac{-B+\beta+2p^{3}m\kappa}{\delta p}\right),
\end{equation}
where
\begin{align*}
&C_{1}(d)=\min\left\{\frac{e}{2}d^{4.5}30^{d+3}, 2^{6d+20}\right\},\\
&A_{k}\ge \max\{\frac{p-1}{2}\h(\alpha_{k}),|\log \alpha_{k}|, 0.16\}, 1\le k\le d,\\
&\Omega=A_{1}\cdots A_{d},\quad \lambda=12p^{7}m,
\end{align*}
and $\h(\cdot)$ is the usual absolute logarithmic height.

We obtain $B\le K_{1}\log B+K_{2}$, where
\begin{align*}
&K_{1}=\delta pC_{1}(d)\Omega(\frac{p-1}{2})^{2}(1+\log \frac{p-1}{2}),\\
&K_{2}=\delta pC_{1}(d)\Omega(\frac{p-1}{2})^{2}(1+\log \frac{p-1}{2})+\beta+ 2p^{3}m\kappa+\delta p\log \lambda.
\end{align*}
By \cite[Lemma 2.3.3]{BH1}, we obtain
$$
B\le B_{0}=2(K_{1}\log K_{1}+K_{2}).
$$

Then by (\ref{Baker}), we have
$$
|q_{c}(P)^{-1}|<\lambda^{p}\exp(pC_{1}(d)\Omega (\frac{p-1}{2})^{2}(1+\log \frac{p-1}{2})(1+\log B_{0})).
$$

Finally we have
\begin{equation}\label{j(P)}
\log|j(P)|<pC_{1}(d)\Omega(\frac{p-1}{2})^{2}(1+\log \frac{p-1}{2})(1+\log B_{0})+p\log\lambda+\log 2.
\end{equation}

Hence, to get a bound for $\log|j(P)|$, we only need to calculate the quantities in the above inequality, and
we will do this in the next section.

It is easy to see that
$$
C_{1}(d)=\min\left\{\frac{e}{2}d^{4.5}30^{d+3}, 2^{6d+20}\right\}
<2d^{4.5}30^{d+3}.
$$

\section{Computations}

\subsection{Upper Bound for $m$ }

Let $h^{+}$, $R^{+}$ and $D^{+}$ be the class number, regulator and discriminant of $\Q(\zeta_{p})^{+}$, respectively.

By \cite[Lemma 8.1 and Theorem 8.2]{Wa}, we have $m^{\prime}=h^{+}$. By \cite[Proposition 2.1 and Lemma 4.19]{Wa}, we have $|D^{+}|=p^{\frac{p-3}{2}}$. Then the class number formula (see \cite[Page 37]{Wa}) gives
$$
h^{+}=\left(\frac{p}{4}\right)^{\frac{p-3}{4}}\cdot\frac{1}{R^{+}}\prod\limits_{\chi\ne 1}L(1,\chi).
$$
Using \cite[Theorem 2]{CF} to the field extension $\Q(\zeta_{p})^{+}/\mathbb{Q}$, we have $R^{+}>0.32$.
Applying \cite[Theorem 1]{Lou} to the field extension $\Q(\zeta_{p})^{+}/\mathbb{Q}$ and
noticing the constant $\mu_{\mathbb{Q}}$ below Formula (6) of \cite{Lou}, we get
$$
|L(1,\chi)|<\frac{1}{2}\log p+0.03<\log p,\quad {\rm if\,} \chi\ne 1.
$$
Hence we have
$$
h^{+}< p^{\frac{p-3}{4}}(\log p)^{\frac{p-3}{2}}.
$$

Finally by (\ref{index}), we obtain
\begin{equation}\label{m}
m\le \frac{h^{+}(p-1)}{2d}<p^{\frac{p+1}{4}}(\log p)^{\frac{p-3}{2}}.
\end{equation}

In the sequel we use the following formulas. For any $n\in \mathbb{Z}$ and $a_{1},\cdots,a_{k},\alpha\in \bar{\mathbb{Q}}$, we have
\begin{align*}
&\h(a_{1}+\cdots+a_{k})\le \h(a_{1})+\cdots+\h(a_{k})+\log k,\\
&\h(a_{1}\cdots a_{k})\le \h(a_{1})+\cdots+\h(a_{k}),\\
&\h(\alpha^{n})=|n|\h(\alpha),\\
& \h(\zeta)=0 \textrm{\quad for any root of unity $\zeta\in \C$}.
\end{align*}

\subsection{Height of $\eta_{k-1}$ for $k=2,\dots, (p-1)/2$}
\label{height}

Let $a\in \F_{p}^{\times}$ and $\sigma_{a}\in \gal\left(\Q(\zeta_{p})/\Q\right)$ induced by the automorphism of
$\Q(\zeta_{p}):\zeta_{p}\to \zeta_{p}^{a}$.

Since $\xi_{k-1}^{\sigma_{a}}=\frac{\bar{\zeta_{p}}^{ak/2}-\zeta_{p}^{ak/2}}{\bar{\zeta_{p}}^{a/2}-\zeta_{p}^{a/2}}$, we have
$\h(\xi_{k-1}^{\sigma_{a}})\le 2\log 2$.
So
\begin{equation}
\h(\eta_{k-1}^{\sigma_{a}})\le \frac{(p-1)\log 2}{d}.\notag
\end{equation}

Notice that if $-\frac{\pi}{2}<x<\frac{\pi}{2}$, then $\frac{\sin x}{x}>\frac{2}{\pi}$.
Since $\xi_{k-1}^{\sigma_{a}}=\frac{\sin(\pi ak/p)}{\sin(\pi a/p)}$,  we have
$$
|\xi_{k-1}^{\sigma_{a}}|\le \frac{1}{|\sin(\pi a/p)|}\le \frac{1}{\sin(\pi /p)}< \frac{p}{2},
$$
and
$$
|\xi_{k-1}^{\sigma_{a}}|\ge |\sin(\pi ak/p)|\ge \sin(\pi/p)>\frac{2}{p}.
$$
So we have $|\log|\xi_{k-1}^{\sigma_{a}}||<\log\frac{p}{2}$.
Hence
\begin{equation}
|\log|\eta_{k-1}^{\sigma_{a}}||<\frac{(p-1)\log\frac{p}{2}}{2d}.\notag
\end{equation}

Since we can view $\gal(K/\Q)$ as a quotient group of $\gal(\Q(\zeta_{p})/\Q)$, for any $\sigma\in\gal(K/\Q)$, we have
\begin{equation}\label{xi}
\h(\eta_{k-1}^{\sigma})\le \frac{(p-1)\log 2}{d}\quad \textrm{and} \quad |\log|\eta_{k-1}^{\sigma}||<\frac{(p-1)\log\frac{p}{2}}{2d}.
\end{equation}

\subsection{Height of $\eta_{0}$}

Following the method in Section \ref{height}, we have $ \h(1-\zeta_{p}^{\sigma_{a}})\le \log 2$. So
\begin{equation}
\h(\eta_{0}^{\sigma_{a}})\le \frac{12p(p-1)\log 2}{d}.\notag
\end{equation}

First we have $|1-\zeta_{p}^{\sigma_{a}}|\le 2$. Second we have
$$
|1-\zeta_{p}^{\sigma_{a}}|^{2}\ge 2-2\cos\frac{\pi}{p}
=4\left(\sin\frac{\pi}{2p}\right)^{2}>\left(\frac{2}{p}\right)^{2}.
$$
So we have $|\log|1-\zeta_{p}^{\sigma_{a}}||<\log\frac{p}{2}$. Hence
\begin{equation}
|\log|\eta_{0}^{\sigma_{a}}||<\frac{12p(p-1)\log\frac{p}{2}}{d}.\notag
\end{equation}

Since we can view $\gal(K/\Q)$ as a quotient group of $\gal(\Q(\zeta_{p})/\Q)$, for any $\sigma\in\gal(K/\Q)$, we obtain
\begin{equation}
\h(\eta_{0}^{\sigma})\le \frac{12p(p-1)\log 2}{d}\quad \textrm{and} \quad |\log|\eta_{0}^{\sigma}||<\frac{12p(p-1)\log\frac{p}{2}}{d}.
\end{equation}

\subsection{Height of $|\Upsilon_{c,\sigma}|$}

Recall that $\Upsilon_{c,\sigma}=\gamma_{c,\sigma}/\eta_{0}^{\sigma}$, $\sigma\in\gal(K/\Q)$ and
$$
\gamma_{c,\sigma}=\prod\limits_{\substack{(a_{1},a_{2})\in \OO\sigma\sigma_{c}\\a_{1}=0}}(1-e^{2i\pi a_{2}})^{12p}.
$$
Notice the description of $\OO$ in Section \ref{X_H1}, we have $|\{(a_{1},a_{2})\in\OO\sigma\sigma_c:a_{1}=0\}|\le2|H|= \frac{2(p-1)}{d}$.
Following the method in Section \ref{height}, we get
\begin{equation}
\h(\gamma_{c,\sigma})\le \frac{24p(p-1)\log 2}{d}.
\notag
\end{equation}

Since $\Upsilon_{c,\sigma}=\gamma_{c,\sigma}/\eta_{0}^{\sigma}$, we have
\begin{equation}
\h(\Upsilon_{c,\sigma})\le \h(\gamma_{c,\sigma})+\h(\eta_{0}^{\sigma})\le \frac{36p(p-1)\log 2}{d}.
\notag
\end{equation}
Noticing that $|\Upsilon_{c,\sigma}|^{2}=\Upsilon_{c,\sigma}\bar{\Upsilon}_{c,\sigma}$, we get
\begin{equation}
\h(|\Upsilon_{c,\sigma}|)\le \frac{36p(p-1)\log 2}{d}.
\end{equation}

Since $a_{1}=0$, we have $a_{2}\in \{\frac{1}{p},\cdots,\frac{p-1}{p}\}$. First we have
$|1-e^{2i\pi a_{2}}|\le 2$. Second
$$
|1-e^{2i\pi a_{2}}|^{2}=2(1-\cos2\pi a_{2})\ge 2(1-\cos\pi/p)=4\sin^{2}\frac{\pi}{2p}\ge \frac{4}{p^{2}}.
$$
So we have $|\log|1-e^{2i\pi a_{2}}||\le \log\frac{p}{2}$, and then
$$
|\log|\gamma_{c,\sigma}||\le \frac{24p(p-1)\log\frac{p}{2}}{d}.
$$
Hence we have
\begin{equation}
|\log|\Upsilon_{c,\sigma}||\le \frac{36p(p-1)\log\frac{p}{2}}{d}.
\end{equation}

\subsection{Calculation of $\Omega$} \label{omega}
Recall that $\Omega=A_{1}\cdots A_{d}$, where
$$
A_{k}\ge \max\{\frac{p-1}{2}\h(\alpha_{k}), |\log \alpha_{k}|, 0.16\}, \quad 1\le k\le d.
$$

If $\ord_{c}V=0$, then $\alpha_{k}=|\eta_{k}|=\pm \eta_{k}$, $1\le k\le d-1$, and $\alpha_{d}=|\Upsilon_{c,1}|^{-1}$.
Then for $1\le k\le d-1$, we can choose $A_{k}=p^{2}/d$. For $A_{d}$, we can choose
$A_{d}=36p^{3}/d$.

If $\ord_{c}V\ne 0$, then $\alpha_{k}=\left|\frac{\eta_{k}^{\ord_{c}V^{\sigma}}}{(\eta_{k}^{\sigma})^{\ord_{c}V}}\right|$, $1\le k\le d-1$,
and $\alpha_{d}=\left|\frac{\Upsilon_{c,1}^{\ord_{c}V^{\sigma}}}{\Upsilon_{c,\sigma}^{\ord_{c}V}}\right|^{-1}$.
For $1\le k\le d-1$, combining Lemma \ref{order} we can choose $A_{k}=p^{6}/d^{2}$. For $A_{d}$, we can choose $A_{d}=36p^{7}/d^{2}$.

Therefore, we can choose
\begin{equation}
\Omega=36p^{6d+1}/d^{2d}.
\end{equation}

\subsection{Calculation of $B_{0}$}

For our purpose we need to calculate $\delta,\beta$ and $\kappa$. In fact, all we want to do is to get a bound for $|\alpha_{k\ell}|$,
$1\le k,\ell\le d-1$.

Let $R_{K}$ be the regulator of $K$. By \cite[Lemma 4.15]{Wa}, we have $|\det A|\ge mR_{K}$.
Applying  \cite[Theorem 2]{CF} to the field extension $K/\mathbb{Q}$, we have
$R_{K}>0.32.$ So we get
$$
|\det A|>0.32m.
$$

Notice that $\alpha_{k\ell}=\frac{1}{\det A}A_{\ell k}$, where $A_{lk}$ is the relative cofactor.
The reader should not confuse the matrix $A$, the constants $A_{k}$ introduced in Section \ref{Using Baker} and the cofactors $A_{lk}$.

By Hadamard's inequality and (\ref{xi}), we have
$$
|A_{\ell k}|\le \left[\frac{(p-1)\sqrt{d-2}\log\frac{p}{2}}{2d}\right]^{d-2}.
$$
Then we have
\begin{align*}
|\alpha_{k\ell}|
&<\left[\frac{(p-1)\sqrt{d-2}\log\frac{p}{2}}{2d}\right]^{d-2}\cdot \frac{1}{0.32m}\\
&<\left(p\sqrt{p}\log p\right)^{\frac{p-1}{2}-2}/m\\
&=p^{\frac{3p-15}{4}}(\log p)^{\frac{p-5}{2}}/m.
\end{align*}
Hence, we obtain
\begin{align*}
&\delta<p^{\frac{3p-3}{4}}(\log p)^{\frac{p-5}{2}},\\
&\beta<36p^{\frac{3p-7}{4}}(\log p)^{\frac{p-3}{2}},\\
&\kappa<p^{\frac{3p-11}{4}}(\log p)^{\frac{p-5}{2}}/m.
\end{align*}

Notice that $d\le (p-1)/2$ and $p\ge 7$, we get $C_{1}(d)\le p^{p+8}$.
Therefore, we have
$$
K_{1}<p^{5p+9}(\log p)^{p-1},  \qquad K_{2}<4p^{5p+9}(\log p)^{p-1},
$$
and then
\begin{equation}
B_{0}<16p^{5p+10}(\log p)^{p},\qquad
1+\log B_{0}< 8p\log p.
\notag
\end{equation}

\subsection{Final results}

Finally, by (\ref{j(P)}) we can get an explicit bound for $\log|j(P)|$ as follows
\begin{align*}
\log|j(P)|&<2pC_{1}(d)\Omega(\frac{p-1}{2})^{2}(1+\log \frac{p-1}{2})(1+\log B_{0})\\
&<C(d)p^{6d+5}(\log p)^{2},
\end{align*}
where $C(d)=30^{d+5}\cdot d^{-2d+4.5}$. Hence we obtain Theorem \ref{main1}.

If we choose $d=(p-1)/2$, applying the bound $p-1\ge 6p/7$ and a few numerical computations, we can get
Theorem \ref{main2}.

\section{The case $\Lambda =0$}
\label{specialcase}
In this section, we suppose that $\Lambda=0$. Then we will obtain a better bound for $\log|j(P)|$ than Theorem \ref{main1}.

First we assume that $\ord_{c}V=0$, i.e. $\ord_{c}U=0$. Then we have $|U(P)|=|\gamma_{c}|$. Since $U(P)$ and $\gamma_{c}$ are real, we have
$U(P)^{2}=\gamma_{c}^{2}$, i.e. $U^{2}(P)=\gamma_{c}^{2}$.

Recall $\Omega_{c}$ and the $q$-parameter $q_c$ mentioned in Section \ref{X_H}.
Let $v$ be an absolute value of $\Q(\zeta_{p})$ normalized to extend a standard absolute value on $\Q$.
for the modular function $U^{2}$, we get the following lemma.
\begin{lemma}
There exist an integer function $f(\cdot)$ with respect to $q_{c}$ and $\lambda_{1}^{c}, \lambda_{2}^{c}, \lambda_{3}^{c}\cdots\in \Q(\zeta_{p})$ such that the following identity holds in $\Omega_{c}$,
\begin{equation}\label{Taylor1}
\log\frac{U^{2}(q_c)}{\gamma_{c}^{2}q_{c}^{\frac{2\ord_{c}U}{p}}}=2\pi f(q_{c})i+\sum\limits_{k=1}^{\infty}\lambda_{k}^{c}q_{c}^{k/p},
\end{equation}
and
\begin{equation}
|\lambda_{k}^{c}|_{v}\le\left\{ \begin{array}{ll}
                 |k|_{v}^{-1} & \textrm{if $v$ is finite},\\
                 48p^{2}(k+p) & \textrm{if $v$ is infinite}.
                 \end{array} \right.
\notag
\end{equation}
In particular, for every $k\ge 1$ we have
$$
\h(\lambda_{k}^{c})\le \log(48p^{3}+48kp^{2})+\log k.
$$
\end{lemma}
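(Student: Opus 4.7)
The plan is to insert the infinite product expansion of the Siegel function $g_{\textbf{a}}$ into the identity $U^{2}(q_c)=u_{\OO\sigma_c}^{2}(\tau)=\prod_{\textbf{a}\in\OO\sigma_c}g_{\textbf{a}}^{24p}(\tau)$. Matching the $q_\tau$-exponents yields the total power $12p\sum_{\textbf{a}} B_2(a_1)=2\ord_c U/p$, and the constant factors $(1-e^{2\pi ia_2})^{24p}$ coming from the indices $(n,a_1)=(0,0)$ in the Siegel product reassemble exactly $\gamma_c^{2}$. Dividing by $\gamma_c^{2}q_c^{2\ord_c U/p}$ therefore reduces the left-hand side of \eqref{Taylor1} to the logarithm of a constant root of unity $C=\prod_{\textbf{a}} e^{24p\pi ia_2(a_1-1)}$ times an absolutely convergent product $\prod_{(\textbf{a},n,\varepsilon)\in S^{*}}(1-q_c^{\beta}e^{i\theta})^{24p}$ in which every exponent $\beta=j/p$ is a positive integer multiple of $1/p$. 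Taking the principal branch of $\log$, the imaginary contribution from $\log C$ and from the branch discrepancies of the individual factors are collected into the term $2\pi if(q_c)$ with $f$ integer-valued.

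Next I would expand each surviving factor as
\begin{equation*}
24p\log(1-q_c^{j/p}e^{i\theta})=-24p\sum_{l\ge 1}\frac{q_c^{lj/p}e^{il\theta}}{l},
\end{equation*}
valid since $|q_c|<1$. Collecting the coefficient of $q_c^{k/p}$ by summing over admissible tuples $(\textbf{a},n,\varepsilon,l)$ with $lj=k$, where $j=p(n+a_1)$ for $\varepsilon=+$ and $j=p(n+1-a_1)$ for $\varepsilon=-$, yields
\begin{equation*}
\lambda_k^{c}=-24p\sum_{lj=k}\frac{e^{\varepsilon\,2\pi ila_2}}{l}.
\end{equation*}
Multiplying through by $k$ and using $1/l=j/k$, the quantity $k\lambda_k^{c}=-24p\sum j\,e^{\varepsilon 2\pi ila_2}$ is a $\Z$-linear combination of $p$-th roots of unity, hence an algebraic integer in $\Q(\zeta_p)$. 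This immediately gives $|\lambda_k^{c}|_v\le |k|_v^{-1}$ at every finite place $v$.

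For the archimedean bound, I would estimate by the triangle inequality and count admissible tuples. For each divisor $l\mid k$ and each sign $\varepsilon$, the cofactor $j=k/l$ pins the coordinate $a_1$ to a specific value in $p^{-1}\F_p$, and at most $p$ elements of $\OO\sigma_c$ share that $a_1$, so there are at most $2p$ admissible tuples per $l$. Combining this with $\sum_{l\mid k}1/l=\sigma(k)/k\le 1+\log k$ gives
\begin{equation*}
|\lambda_k^{c}|_v\le 24p\cdot 2p\cdot(1+\log k)\le 48p^{2}(k+p),
\end{equation*}
as stated. The height estimate then follows from the Weil height formula combined with the two local bounds: the finite places contribute $\log k$ by the product formula applied to the integer $k$, and the archimedean places contribute $\log(48p^{2}(k+p))$, giving $\h(\lambda_k^{c})\le\log k+\log(48p^{3}+48kp^{2})$.

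The main obstacle I foresee is the first step: checking rigorously that the constant $C$ together with the branch discrepancies really combine into a genuine integer-valued $f(q_c)$ rather than leaving an anomalous non-integer residue. This should reduce to verifying that $12p\sum_{\textbf{a}\in\OO\sigma_c}a_2(a_1-1)\in\Z$, which ought to follow from the closure of $\OO\sigma_c$ under $\textbf{a}\mapsto -\textbf{a}$ (a consequence of $-I\in G_H$) together with the divisibility $d\mid(p-1)/2$ which forces the relevant normalizing integers to be integral.
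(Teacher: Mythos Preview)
Your approach is correct and mirrors the paper's: insert the Siegel product for $g_{\mathbf{a}}$, strip off $\gamma_c^{2}q_c^{2\ord_c U/p}$, take logarithms, and expand each surviving factor as a power series in $q_c^{1/p}$. The archimedean count is organized differently---the paper fixes $\mathbf{a}\in\OO\sigma_c$ and counts the at most $k/p+1$ values of~$n$ that can contribute, obtaining $24p\cdot\lvert\OO\sigma_c\rvert\cdot 2(k/p+1)\le 48p^{2}(k+p)$ via $\lvert\OO\sigma_c\rvert=(p^{2}-1)/d$, whereas you fix $l\mid k$ and count the at most $p$ compatible~$\mathbf{a}$, reaching the sharper intermediate bound $48p^{2}(1+\log k)$---but both routes land on the stated inequality. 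Your treatment of the finite places (clearing the denominator~$k$ to exhibit $k\lambda_k^c$ as an algebraic integer) is equivalent to the paper's per-factor estimate $|\beta_k|_v\le|k|_v^{-1}$.

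Regarding the obstacle you flag: the paper does not resolve it either. Its displayed product formula for $U^{2}(q_c)/(\gamma_c^{2}q_c^{2\ord_c U/p})$ silently drops the root-of-unity factor $C=\prod_{\mathbf{a}\in\OO\sigma_c}e^{24p\pi ia_2(a_1-1)}$ coming from the Siegel expansion, after which the integer-valued $f(q_c)$ is obtained simply by invoking a theorem from Ahlfors on logarithms of absolutely convergent infinite products. So the question of whether $12p\sum_{\mathbf{a}\in\OO\sigma_c}a_2(a_1-1)\in\Z$ is left implicit in the paper as well; you have correctly identified a point that deserves verification rather than discovered a flaw in your own strategy.
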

\begin{proof}
By definition, we have
\begin{equation}\label{U(q_c)}
\frac{U^{2}(q_c)}{\gamma_{c}^{2}q_c^{\frac{2\ord_{c}U}{p}}}=
\prod\limits_{{\rm\bf a}\in \OO\sigma_{c}}\prod\limits_{\substack{n=0\\n+a_{1}\ne 0}}^{\infty}(1-q_{c}^{n+a_{1}}e^{2\pi ia_{2}})^{24p}
\prod\limits_{n=0}^{\infty}(1-q_{c}^{n+1-a_{1}}e^{-2\pi ia_{2}})^{24p}.
\end{equation}
Since
$$
\sum\limits_{{\rm\bf a}\in \OO\sigma_{c}}\left(\sum\limits_{\substack{n=0\\n+a_{1}\ne 0}}^{\infty}24p|q_{c}|^{n+a_{1}}+
\sum\limits_{n=0}^{\infty}24p|q_{c}|^{n+1-a_{1}}\right)
$$
is convergent, it follows from \cite[Chapter 5 Section 2.2 Theorem 6]{Ahlfors} that the right-hand side of (\ref{U(q_c)}) is absolutely convergent.
Then we can write it as the form $\prod\limits_{n=1}^{\infty}(1+d_{n})$ such that $\prod\limits_{n=1}^{\infty}(1+d_{n})$ is absolutely convergent.
Hence, \cite[Chapter 5 Section 2.2 Theorem 5]{Ahlfors} gives
\begin{align*}
&\log \frac{U^{2}(q_c)}{\gamma_{c}^{2}q_c^{\frac{2\ord_{c}U}{p}}}\\
&=2\pi f(q_{c})i+
\sum\limits_{{\rm\bf a}\in \OO\sigma_{c}}\left(\sum\limits_{\substack{n=0\\n+a_{1}\ne 0}}^{\infty}24p\log(1-q_{c}^{n+a_{1}}e^{2\pi ia_{2}})+
\sum\limits_{n=0}^{\infty}24p\log(1-q_{c}^{n+1-a_{1}}e^{-2\pi ia_{2}})\right).
\end{align*}
Applying the Taylor expansion of the logarithm function to the right-hand side of the above formula, we get the desired identity (\ref{Taylor1}).

For a fixed non-negative integer $n$ (where we assume $n>0$ if $a_{1}=0$), write
\begin{equation}\label{Taylor}
\log(1-q_{c}^{n+a_{1}}e^{2\pi ia_{2}})=\sum\limits_{k=1}^{\infty}\beta_{k}q^{k/N}.
\notag
\end{equation}
An immediate verification shows that
\begin{equation}
|\beta_{k}|_{v}\le\left\{ \begin{array}{ll}
                 |k|_{v}^{-1} & \textrm{if $v$ is finite},\\
                 1 & \textrm{if $v$ is infinite}.
                 \end{array} \right.
\notag
\end{equation}
Same estimates hold true for the coefficients of the $q$-series for $\log(1-q_{c}^{n+1-a_{1}}e^{-2\pi ia_{2}})$.

For each ${\rm\bf a}\in \OO\sigma_{c}$, the number of coefficients in the $q$-series for $\log(1-q_{c}^{n+a_{1}}e^{2\pi ia_{2}})$ which may contribute to $\lambda_{k}^{c}$ (those with $0\le n\le k/p$) is at most $k/p+1$, and the same is true for the $q$-series for $\log(1-q_{c}^{n+1-a_{1}}e^{-2\pi ia_{2}})$. The bound for $|\lambda_{k}^{c}|_{v}$ now follows by summation.
\end{proof}

\begin{corollary}\label{Taylor2}
With the assumption $\ord_{c}U=0$, we have $\lambda_{k}^{c}\ne 0$ for some $k\le p^{5}$.
\end{corollary}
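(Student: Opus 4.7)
The plan is to derive a contradiction by assuming $\lambda_k^c=0$ for every $k\le p^5$ and comparing the local vanishing order of $U^2-\gamma_c^2$ at the cusp $c$ with its global degree as a rational function on $X_H$. Under this assumption, the identity (\ref{Taylor1}) (with $\ord_c U=0$) reduces to
$$
\log\frac{U^2(q_c)}{\gamma_c^2}=2\pi i\,f(q_c)+\sum_{k\ge 1}\lambda_k^c q_c^{k/p},
$$
and since $f$ is integer-valued we have $e^{2\pi i f(q_c)}=1$, whence $U^2/\gamma_c^2=\exp\bigl(\sum_{k\ge 1}\lambda_k^c t_c^k\bigr)$ where $t_c=q_c^{1/p}$ is the local parameter at $c$ (recall that the covering $X_H\to X(1)$ is ramified of index $p$ at every cusp). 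If $\lambda_k^c=0$ for all $k\le p^5$, the series inside the exponential has order $\ge p^5+1$ in $t_c$, so $U^2-\gamma_c^2$ vanishes at $c$ to order at least $p^5+1$.

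Next I would bound the degree of $U^2-\gamma_c^2$ as a rational function on $X_H$. Since $U=u_{\OO}$ is a modular unit, its divisor is supported on the $(p-1)/2$ cusps of $X_H$, and Lemma \ref{order} gives $|\ord_{c'}U|\le p^2(p^2-1)/d$ at every cusp $c'$. Using the identity $\deg F=\tfrac12\sum_x|\ord_x F|$ valid for any nonzero rational function $F$ on a complete curve, we obtain
$$
\deg U^2=2\deg U\le\frac{(p-1)p^2(p^2-1)}{2d}<\frac{p^5}{2d}\le\frac{p^5}{6},
$$
where the last inequalities use $d\ge 3$ and $p\ge 7$. Lemma \ref{rank} (with rank $d-1\ge 2$) allows the choice of $\OO$ so that $U$ is nonconstant, whence $U^2-\gamma_c^2$ is a nonzero rational function of the same degree as $U^2$, hence of degree strictly less than $p^5$. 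The vanishing order at any single point is bounded by the degree, which contradicts the lower bound $p^5+1$ from the previous paragraph.

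The only subtlety, which I expect to be minor, is the implication ``$\lambda_k^c=0$ for $k\le p^5$ $\Rightarrow$ $U^2-\gamma_c^2$ vanishes to order $\ge p^5+1$ at $c$''; this uses the integer-valuedness of $f$ in (\ref{Taylor1}) to annihilate the factor $e^{2\pi i f(q_c)}$ and the routine estimate $\exp(z)-1=O(z)$ near $z=0$. The rest of the argument is a direct deployment of Lemma \ref{order} and the count of $(p-1)/2$ cusps on $X_H$, so no further obstacle is anticipated.
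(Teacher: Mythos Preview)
Your argument is correct and follows essentially the same route as the paper: bound $\ord_c(U^2/\gamma_c^2-1)$ from above by $\deg U^2$, and bound the latter via Lemma~\ref{order} together with the count of $(p-1)/2$ cusps. The only cosmetic difference is that you dispose of the integer-valued term $f(q_c)$ by exponentiating (using $e^{2\pi i f}=1$), whereas the paper observes $f(q_c(c))=0$ and works directly with the additive valuation of the logarithm; both lead to the same inequality $\min\{k:\lambda_k^c\ne 0\}\le\deg U^2<p^5$.
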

\begin{proof}
Since $\ord_{c}U=0$ and $U$ is not a constant, there must exist some $\lambda_{k}^{c}\ne 0$. Under the assumption $\ord_{c}U=0$,
we have $U(c)=\gamma_{c}$, and then $f(q_c(c))=0$ by (\ref{Taylor1}).
We extend the additive valuation $\ord_{c}$ from the field $K(X_{H})$ to the field of formal
power series $K((q_{c}^{1/p}))$. Then $\ord_{c}q_{c}^{1/p}=1$ and
$\ord_{c}\left(-2\pi f(q_{c})i+\log (U^{2}/\gamma_{c}^{2})\right)\le\ord_{c}\log (U^{2}/\gamma_{c}^{2})=\ord_{c}(U^{2}/\gamma_{c}^{2}-1)$. The latter quantity is bounded by the degree of
$U^{2}/\gamma_{c}^{2}-1$, which is equal to the degree of $U^{2}$.

The degree of $U^{2}$ is equal to $\sum\limits_{c_{0}}\left|\ord_{c_{0}}\, U\right|$,
here the sum runs through all the cusps of $X_{H}$. Then the result follows from Lemma \ref{order}.
\end{proof}

Now we can get a bound for $\log|j(P)|$.
\begin{proposition}
Under the assumptions $\Lambda=0$ and $\ord_{c}U=0$, we have
$$
\log|j(P)|\le p^{2}\log(48p^{12}+48p^{8})+p\log(96p^{2}(p^{5}+p+1))+\log 2.
$$
\end{proposition}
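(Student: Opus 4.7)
The plan is to exploit the identity $U(P)^2 = \gamma_c^2$ in combination with the $q$-expansion~(\ref{Taylor1}). With $\ord_c U = 0$ and $U^2(P)/\gamma_c^2 = 1$, exponentiating~(\ref{Taylor1}) at $q_c = q_c(P)$ forces
$$
\sum_{k=1}^{\infty} \lambda_k^c\, q_c(P)^{k/p} \;=\; 2\pi M\, i
$$
for some integer $M$, which absorbs both the value $f(q_c(P))$ and the branch of the logarithm. I would split on whether $M$ vanishes. When $M \ne 0$ the left-hand side has modulus at least $2\pi$, while the bound $|\lambda_k^c| \le 48 p^2(k+p)$ together with the standing hypothesis $|q_c(P)|^{1/p} \le 1/10$ (from $|q_c(P)| \le 10^{-p}$) shows that the series is small unless $|q_c(P)^{-1}|$ is at most polynomial in $p$; this case therefore yields a bound far below the target.

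In the principal case $M = 0$, Corollary~\ref{Taylor2} supplies a smallest index $k_0 \le p^5$ with $\lambda_{k_0}^c \ne 0$. Isolating this leading term gives
$$
|\lambda_{k_0}^c|\, |q_c(P)|^{k_0/p} \;\le\; \sum_{k > k_0} |\lambda_k^c|\, |q_c(P)|^{k/p},
$$
and a direct geometric-series estimate (using $|q_c(P)|^{1/p} \le 1/10$ and $k_0+p \ge 8$, so that $\sum_{j\ge 0}(k_0+p+1+j)r^j \le 2(k_0+p+1)$) produces
$$
|\lambda_{k_0}^c| \;\le\; 96\, p^2(k_0+p+1)\,|q_c(P)|^{1/p} \;\le\; 96\, p^2(p^5+p+1)\,|q_c(P)|^{1/p},
$$
hence $|q_c(P)|^{-1} \le \bigl[96\, p^2(p^5+p+1)/|\lambda_{k_0}^c|\bigr]^p$.

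The heart of the argument is now a Liouville-type lower bound on $|\lambda_{k_0}^c|$. The finite-place estimate $|\lambda_k^c|_v \le |k|_v^{-1}$ implies that $k_0 \lambda_{k_0}^c$ is a nonzero algebraic integer in $\Q(\zeta_p)$, while the uniform archimedean estimate $|\lambda_k^c|_v \le 48 p^2(k+p)$ bounds every Galois conjugate of $\lambda_{k_0}^c$ by $48\, p^2(k_0+p)$. Combining $|\mathcal{N}_{\Q(\zeta_p)/\Q}(k_0 \lambda_{k_0}^c)| \ge 1$ with the conjugate bound applied to the $p-2$ nontrivial embeddings gives
$$
|\lambda_{k_0}^c| \;\ge\; \frac{1}{k_0\bigl(48\, k_0\, p^2(k_0+p)\bigr)^{p-2}} \;\ge\; \frac{1}{p^5 (48 p^{12}+48 p^8)^{p-2}}.
$$
Substituting, using $|j(P)| \le 2|q_c(P)^{-1}|$, and absorbing the resulting $5p\log p$ remainder into $p^2\log(48 p^{12}+48 p^8)$ (valid since $2p\log(48 p^{12}) > 5p\log p$ for $p \ge 7$) yields the claimed inequality. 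The principal obstacle is the Liouville step: one must verify that the archimedean bound really controls \emph{every} Galois conjugate of $\lambda_{k_0}^c$ (not merely the distinguished embedding) and extract integrality from the $p$-adic estimates; once these are in place the remaining manipulations are routine.
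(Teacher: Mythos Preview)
Your argument is correct and follows the paper's proof closely: the same case split on whether the integer multiple of $2\pi i$ vanishes, the same isolation of the first nonvanishing coefficient $\lambda_{k_0}^c$ via Corollary~\ref{Taylor2}, and the same tail estimate $96p^2(k_0+p+1)|q_c(P)|^{1/p}$ for the remaining series. The only substantive difference is in the Liouville step: the paper invokes the Product Formula together with the height bound $\h(\lambda_n^c)\le\log(48np^3+48n^2p^2)$ to obtain $|\lambda_n^c|\ge(48np^3+48n^2p^2)^{-(p-1)}$, whereas you extract algebraic integrality of $k_0\lambda_{k_0}^c$ from the finite-place estimate and bound the other $p-2$ conjugates directly via the uniform archimedean estimate in the lemma. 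Your version gives the slightly sharper exponent $p-2$ (at the cost of an extra $p^5$ factor that you then absorb), but the two formulations are equivalent and lead to the same final inequality.
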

\begin{proof}
Let $n$ be the smallest $k$ such that $\lambda_{k}^{c}\ne 0$. Then $n\le p^{5}$.
We assume that $|q_c(P)|\le 10^{-p}$, otherwise there is nothing to prove. Since $\ord_{c}U=0$
and $U^{2}(P)=\gamma_{c}^{2}$, it follows from (\ref{Taylor1}) that $2\pi f(q_{c}(P))i+\sum\limits_{k=n}^{\infty}\lambda_{k}^{c}q_{c}(P)^{k/p}=0$.

Suppose that $f(q_{c}(P))=0$. Then $|\lambda_{n}^{c}q_{c}(P)^{n/p}|=|\sum\limits_{k=n+1}^{\infty}\lambda_{k}^{c}q_{c}(P)^{k/p}|$.
 On one side, we have
\begin{align*}
|\sum\limits_{k=n+1}^{\infty}\lambda_{k}^{c}q_{c}(P)^{k/p}|\le \sum\limits_{k=n+1}^{\infty}|\lambda_{k}^{c}||q_{c}(P)|^{k/p}
&\le \sum\limits_{k=n+1}^{\infty}48p^{2}(k+p)|q_{c}(P)|^{k/p}\\
&=96p^{2}(n+p+1)|q_{c}(P)|^{(n+1)/p}.
\end{align*}
On the other side, using Product Formula we get
$$
|\lambda_{n}^{c}|\ge e^{-[\Q(\zeta_{p}):\Q]\h({\lambda_{n}^{c}})}\ge (48np^{3}+48n^{2}p^{2})^{-p+1}.
$$
Then we obtain
$$
\log|q_c(P)^{-1}|\le p^{2}\log(48p^{12}+48p^{8})+p\log(96p^{2}(p^{5}+p+1)).
$$
Finally, the desired result follows from (\ref{j(P)0}).

Suppose that $f(q_{c}(P))\ne 0$. Then $2\pi\le|\sum\limits_{k=n}^{\infty}\lambda_{k}^{c}q_{c}(P)^{k/p}|\le 96p^{2}(n+p)|q_{c}(P)|^{n/p}$.
Then we get $\log|q_c(P)^{-1}|\le p\log(96p^{2}(p^{5}+p))$. So we have
$$
\log|j(P)|\le p\log(96p^{2}(p^{5}+p))+\log 2.
$$
\end{proof}

Now we assume that $\ord_{c}V\ne 0$, i.e. $\ord_{c}U\ne 0$. By Lemma \ref{u2}, we can choose a $U$ such that $\ord_{c}U< 0$.
Then we choose a $\sigma$ such that $\ord_{c}U^{\sigma}> 0$. Put $n_{1}=-\ord_{c}U$ and $n_{2}=\ord_{c}U^{\sigma}$.
Since $U(P)$ and $\gamma_{c}$ are real, we have $U(P)^{2n_{2}}U^{\sigma}(P)^{2n_{1}}=\gamma_{c}^{2n_{2}}\gamma_{c,\sigma}^{2n_{1}}$,
i.e. $U^{2n_{2}}(U^{\sigma})^{2n_{1}}(P)=\gamma_{c}^{2n_{2}}\gamma_{c,\sigma}^{2n_{1}}$.
Lemma \ref{rank} guarantees that $U^{2n_{2}}(U^{\sigma})^{2n_{1}}$ is not a constant.

Applying the same method as the above without difficulties, we can also get a better bound than Theorem \ref{main1}.
We omit the details here.

\section*{Acknowledgement}
We are very grateful to our advisor Yuri Bilu for careful reading and lots of stimulating suggestions and helpful discussions, especially for his key suggestion in Section \ref{specialcase}.
We are also grateful to the referee for careful reading and very useful comments.

\end{document}